\newcommand{\vol}{\mathrm{Vol}}
\newcommand{\ep}{\epsilon}
\newtheorem{thm}{Theorem}[section]
\newtheorem{lmm}[thm]{Lemma}
\newtheorem{cor}[thm]{Corollary}
\newtheorem{defn}[thm]{Definition}
\newcommand{\bigavg}[1]{\biggl\langle #1 \biggr\rangle}
\newcommand{\ee}{\mathbb{E}}
\newcommand{\mf}{\mathcal{F}}
\newcommand{\ml}{\mathcal{L}}
\newcommand{\cp}{\mathcal{P}}
\newcommand{\pp}{\mathbb{P}}
\newcommand{\rr}{\mathbb{R}}
\newcommand{\smallavg}[1]{\langle #1 \rangle}
\newcommand{\var}{\mathrm{Var}}
\newcommand{\zz}{\mathbb{Z}}
\newcommand{\fpar}[2]{\frac{\partial #1}{\partial #2}}
\numberwithin{equation}{section}
\newcommand{\eq}[1]{\begin{align*} #1 \end{align*}}
\newcommand{\eeq}[1]{\begin{align} \begin{split} #1 \end{split} \end{align}}
\newcommand{\tv}{d_{\textup{TV}}}
\begin{document}
%\title[Gauge-string duality]{Towards a mathematical understanding of gauge-string duality}
\title[Lower bounds on fluctuations]{A general method for lower bounds on fluctuations of random variables}
\author{Sourav Chatterjee}
\address{\newline Department of Statistics \newline Stanford University\newline Sequoia Hall, 390 Serra Mall \newline Stanford, CA 94305\newline \newline \textup{\tt souravc@stanford.edu}}
\thanks{Research partially supported by NSF grant DMS-1608249}
\keywords{Variance lower bound, first-passage percolation, random assignment problem, stochastic minimal matching problem, stochastic traveling salesman problem, spin glass, Sherrington--Kirkpatrick model, random matrix, determinant}
\subjclass[2010]{60E15, 60C05, 60K35, 60B20}

\begin{abstract}
There are many ways of establishing upper bounds on fluctuations of random variables, but there is no systematic approach for lower bounds. As a result, lower bounds are unknown in many important problems. This paper introduces a general method for lower bounds on fluctuations.  The method is used to obtain  new results for the stochastic traveling salesman problem, the stochastic minimal matching problem, the random assignment problem, the Sherrington--Kirkpatrick model of spin glasses, first-passage percolation and random matrices. A long list of open problems is provided at the end.
\end{abstract}

\maketitle

\tableofcontents

\vskip.5in
%\newpage
\section{Theory}
\subsection{The problem of lower bounds}\label{intro}
The problem of establishing upper bounds on fluctuations of random variables is widely studied in the literature on concentration inequalities~\cite{blm13, ledoux01}. The theory for lower bounds, however, is not so well-developed. In fact, the  only available methods for computing lower bounds are the following:
\begin{enumerate}
\item Prove a distributional limit theorem. This is possible only in problems where classical tools are applicable, and in a small number of modern problems that admit exact calculations. For most of the contemporary `hard' problems that do not have a miraculous exactly solvable structure, we are very far from proving distributional limit theorems. 
\item Prove a lower bound on some central moment (such as the variance) and a matching upper bound on a higher central moment. The Paley--Zygmund second moment method would then give a lower bound on the order of fluctuations. There is a general method for obtaining lower bounds on variances, due to \citet{wehraizenman90}. However, matching upper bounds are rarely available. First-passage percolation is one example where  the best known upper and lower bounds on fluctuations do not match~\cite{newmanpiza,pemantleperes}. 
\item A coupling technique invented by \citet{janson94} and used by \citet{bollobasjanson} to obtain the first nontrivial lower bound on the fluctuations of the longest increasing subsequence in a random permutation. The main lemma of~\cite{janson94} is closely related to the method of this paper.
\item Problem-specific techniques, as in \cite{gongetal, houdrema, houdrematzinger, lembermatzinger, rhee91}. Some of these are also related to the method proposed here.
\end{enumerate}
Besides the above, there is a recent work of \citet{jansonwarnke} on a general lower bound for lower tails of sums of weakly dependent binary random variables. This bound, however, is for the large deviation regime; it is not meant to be used for understanding typical fluctuations. In the examples where the Janson--Warnke lower bound applies, typical fluctuations can be understood more comprehensively by proving central limit theorems using existing technology for sums of weakly dependent random variables.

What is implicit in the above discussion is that while an upper bound on the variance gives an upper bound on the order of fluctuations by Chebychev's inequality, a lower bound on the variance cannot be used on its own for demonstrating a lower bound on the order of fluctuations. For example, one can easily construct a sequence of random variables which converge in probability to a deterministic constant, but whose variances stay bounded away from zero. In such an example, it is unreasonable to say that the fluctuations do not tend to zero.  In the absence of a simple numerical measure for lower bounds on fluctuations, the following definition looks reasonable.
\begin{defn}\label{lowdef}
Let $\{X_n\}_{n\ge 1}$ be a sequence of random variables and let $\{\delta_n\}_{n\ge 1}$ be a sequence of positive real numbers. We will say that $X_n$ has fluctuations of order at least $\delta_n$ if there are positive constants $c_1$ and $c_2$ such that for all large $n$, and for all $-\infty < a\le b<\infty$ with $b-a\le c_1 \delta_n$, $\pp(a\le X_n\le b) \le 1-c_2$.
\end{defn}
In other words, $X_n$ has fluctuations of order at least $\delta_n$ if it is impossible to construct a sequence of intervals $I_n$, such that $I_n$ has length of order $\delta_n$ and $\pp(X_n\in I_n)\to 1$ as $n\to \infty$. It is easy to see that if $\delta_n^{-1}X_n$ tends to a non-degenerate limit in distribution, then $X_n$ has fluctuations of order at least $\delta_n$ according to the above definition. It is also easy to see that if $X_n$ has fluctuations of order $\delta_n$ according to the above definition, then $\var(X_n)$ is at least of order $\delta_n^2$ (but not vice versa).

Definition \ref{lowdef} is closely related to the notion of concentration functions introduced by~\citet{levy37}. The concentration function $f$ of a random variable $X$ is defined as 
\[
f(l) := \sup_{x\in \rr} \pp(x\le X\le x+l). 
\]
In the language of concentration functions, Definition \ref{lowdef} can be restated as follows: A sequence of random variables $\{X_n\}_{n\ge 1}$, with concentration functions $\{f_n\}_{n\ge 1}$, has fluctuations of order at least $\delta_n$ as $n\to \infty$ if for some $c>0$, 
\[
\limsup_{n\to \infty} f_n(c\delta_n) < 1. 
\]
The theory of L\'evy concentration functions is well-developed for sums of independent random variables~\cite{petrov75} and more generally in situations where a central limit theorem can be proved~\cite{chenetal0}, but not for more complicated objects, especially where distributional limits cannot be established by existing methods.

%\section{The method}\label{lemmasec}
\subsection{Lower bounds via coupling}
The following simple lemma gives a coupling technique for uniform upper bounds on probabilities of intervals of a given length. The idea is that given a random variable $X$ and a number $\delta$, we construct another random variable $Y$ on the same probability space, such that the law of $X$ and the law of $Y$ are close in total variation distance, and yet there is a substantial chance of $|X-Y|> \delta$. Under these circumstances, the following lemma shows that the probability of $X$ belonging to an interval of length $\le \delta$ is bounded away from one. This is the main tool of this paper. 
\begin{lmm}\label{keylmm}
Let $X$ and $Y$ be two random variables defined on the same probability space. Then for any $-\infty<a\le b<\infty$,
\[
\pp(a\le X\le b) \le \frac{1}{2}(1+\pp(|X-Y|\le b-a)+ \tv(\ml_X, \ml_Y)),
\]
where $\ml_X$ is the law of $X$, $\ml_Y$ is the law of $Y$, and $\tv$ is total variation distance.
\end{lmm}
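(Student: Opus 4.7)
The plan is to prove the lemma via two elementary inequalities and then combine them. Abbreviate the three events $A = \{a\le X\le b\}$, $B = \{a\le Y\le b\}$, and $C = \{|X-Y|\le b-a\}$, and write $\delta = b-a$.

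The first step is a geometric observation. If $X\in[a,b]$ and simultaneously $|X-Y|>\delta$, then $Y$ must lie outside $[a,b]$: indeed, if $Y\ge a$ then $Y\le X+\delta\le b+\delta$ is too weak, but more carefully, $Y > X+\delta \ge a+\delta = b$ in one case and $Y < X-\delta \le b-\delta = a$ in the other. Hence $A\cap C^c\subseteq B^c$, which gives
\[
\pp(A) = \pp(A\cap C) + \pp(A\cap C^c) \le \pp(C) + \pp(B^c) = \pp(C) + 1 - \pp(B).
\]

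The second step uses total variation. Since $A$ and $B$ are defined by the same Borel set $[a,b]$ applied to $X$ and $Y$ respectively, the definition of $\tv$ gives $|\pp(A)-\pp(B)|\le\tv(\ml_X,\ml_Y)$, so in particular
\[
\pp(B) \ge \pp(A) - \tv(\ml_X,\ml_Y).
\]

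Substituting this lower bound on $\pp(B)$ into the first inequality yields
\[
\pp(A) \le \pp(C) + 1 - \pp(A) + \tv(\ml_X,\ml_Y),
\]
and rearranging gives $2\pp(A)\le 1+\pp(C)+\tv(\ml_X,\ml_Y)$, which is the stated bound. There is no real obstacle here: the content of the lemma lies entirely in the geometric inclusion $A\cap C^c\subseteq B^c$ together with the defining property of total variation distance; what makes the statement useful is not the proof but the flexibility in choosing the coupling $(X,Y)$ in applications.
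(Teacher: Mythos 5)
Your proof is correct and is essentially the same argument as the paper's: both rest on the inclusion $\{X\in I\}\cap\{Y\in I\}\subseteq\{|X-Y|\le b-a\}$ (your $A\cap C^c\subseteq B^c$ is its contrapositive) together with the total-variation bound $\pp(Y\in I)\ge\pp(X\in I)-\tv(\ml_X,\ml_Y)$. The paper phrases the first step as $1\ge\pp(\{X\in I\}\cup\{Y\in I\})=\pp(X\in I)+\pp(Y\in I)-\pp(\{X\in I\}\cap\{Y\in I\})$ rather than decomposing $A$ over $C$ and $C^c$, but after rearrangement the two computations are identical.
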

\begin{proof}
Let $I$ denote the interval $[a,b]$. Then note that 
\eq{
1 &\ge \pp(\{X\in I\}\cup \{Y\in I\})\\
&= \pp(X\in I) + \pp(Y\in I) - \pp(\{X\in I\}\cap \{Y\in I\}).
}
But
\eq{
\pp(Y\in I)&\ge \pp(X\in I)-\tv(\ml_X, \ml_Y),
}
and 
\eq{
\pp(\{X\in I\}\cap \{Y\in I\}) &\le \pp(|X-Y|\le b-a).
}
The proof is completed by combining the above inequalities.
\end{proof}
Some variants of this coupling approach for lower bounds on fluctuations are  already present in \cite{gongetal, houdrema, houdrematzinger, lembermatzinger, rhee91} for the specific problems handled in those papers; but the potential generality of the idea was not recognized in earlier works. \citet{janson94} proved a similar lemma, but where $Y$ was assumed to have the same law as $X$. As far as I know, the exact statement of Lemma \ref{keylmm} is actually a new result. We will see later how this nearly trivial lemma can be used to obtain optimal lower bounds on the orders of fluctuations of some highly complicated random variables. 

Incidentally, one can have a version of Lemma \ref{keylmm} with twice the Kolmogorov distance instead of the total variation distance on the right. The proof makes it clear that this stronger statement is valid. However, bounding the  total variation distance is often more manageable in problems of interest (as we will see in all of the examples in this paper), which is why the lemma is presented in the above form. 

\subsection{A simple example}\label{lemmasec}
For an elementary application of Lemma \ref{keylmm} that uses nothing more than Chebychev's inequality, consider the following example. Let $X_1,\ldots, X_n$ be i.i.d.~Bernoulli$(1/2)$ random variables, and let 
\[
S_n = X_1+\cdots+X_n.
\]
By the central limit theorem, we know that $S_n$ has fluctuations of order $n^{1/2}$. Can we prove a lower bound of order $n^{1/2}$  using Lemma~\ref{keylmm}? To do this, we first define a suitable perturbation $(X_1',\ldots, X_n')$ of the vector $(X_1,\ldots,X_n)$, coupled with $(X_1,\ldots, X_n)$ on the same probability space. Let $\alpha\in (0,1)$ be a constant, to be chosen later, and let $\ep := \alpha n^{-1/2}$. For each $i$, let
\[
X_i' :=
\begin{cases}
X_i &\text{ with probability $1-\ep$,}\\
1 &\text{ with probability $\ep$.}
\end{cases}
\]
Then note that $X_1',\ldots,X_n'$ are i.i.d.~Bernoulli$((1+\ep)/2)$ random variables. Recall that if $\mu$ and $\nu$ are probability measures on a set $\Omega$, and $\nu$ has density $f$ with respect to $\mu$, then 
\[
\tv(\mu, \nu) = \int_\Omega(1-f)_+d\mu,
\]
where $x_+$ denotes the positive part of a real number $x$. Using this representation, a simple calculation shows that 
\eq{
\tv(\ml_{(X_1,\ldots, X_n)}, \ml_{(X_1',\ldots, X_n')}) &= \ee\bigl(1-(1+\ep)^{S_n} (1-\ep)^{n-S_n}\bigr)_+\\
&= \ee\biggl(1-(1-\ep^2)^{n/2}\biggl(\frac{1+\ep}{1-\ep}\biggr)^{S_n - n/2}\biggr)_+. 
}
The quantity within the expectation is always between $0$ and $1$. Moreover, by Chebychev's inequality, for any $\beta>0$ we have
\[
\pp(|S_n - n/2| \ge \beta n^{1/2})\le \frac{1}{4\beta^2}.
\]
Choosing $\beta = \alpha^{-1/2}$, we get
\eq{
&\tv(\ml_{(X_1,\ldots, X_n)}, \ml_{(X_1',\ldots, X_n')}) \\
&\le \frac{\alpha}{4} + 1-\biggl(1-\frac{\alpha^2}{ n}\biggr)^{n/2} \biggl(\frac{1+\alpha n^{-1/2}}{1-\alpha n^{-1/2}}\biggr)^{-\alpha^{-1/2} n^{1/2}}\\
&\le C\sqrt{\alpha},
}
where $C$ does not depend on $n$ or the choice of $\alpha$. Thus, we may choose $\alpha$ so small that the total variation distance is $\le 1/2$ for all $n$.  Next, let 
\[
S_n' := X_1'+\cdots+X_n'.
\]
Notice that for each $i$, $X_i'= X_i$ with probability $1-\ep/2$ and $X_i'=X_i+1$ with probability $\ep/2$. From this observation, it is not difficult to prove using Chebychev's inequality that 
\[
\lim_{n\to \infty}\pp\biggl(S_n' \ge S_n + \frac{\alpha n^{1/2}}{3}\biggr) =1.
\]
Choose $n$ so large that the above probability is at least $2/3$. Since
\[
\tv(\ml_{S_n}, \ml_{S_n'})\le \tv(\ml_{(X_1,\ldots, X_n)}, \ml_{(X_1',\ldots, X_n')})\le \frac{1}{2},
\]
Lemma~\ref{keylmm} now implies that for any interval $I$ of length less than $\alpha n^{1/2}/3$,
\[
\pp(S_n\in I) \le \frac{1}{2}\biggl(1+\frac{1}{2}+ \frac{1}{3}\biggr) = \frac{11}{12}. 
\]
This shows that in the sense of Definition \ref{lowdef}, $S_n$ has fluctuations of order at least $n^{1/2}$. 

\subsection{Total variation distance between product measures}
The above example may appear to be a little ad hoc, because we used an explicit formula for the total variation distance to do our calculations. This, however, is not the case. There is a systematic way of upper bounding the total variation distances between product measures using a measure of similarity between probability measures called the Hellinger affinity. Although this method is well-known and discussed at length in various texts (for example, in~\cite{lecamyang} and \cite{levinetal}), I will now give a quick summary for the sake of completeness and to save the reader the trouble of looking up references. 

Let $(\Omega, \mf)$ be a measurable space, and let $\nu$ be a probability measure on this space. Let $f$ and $g$ be two probability densities with respect to $\nu$, and let $\mu$ and $\mu'$ denote the corresponding measures. The Hellinger affinity or Hellinger integral between $\mu$ and $\mu'$ is defined as
\[
\rho(\mu, \mu') := \int_\Omega \sqrt{fg} \, d\nu. 
\] 
It is easy to see that this does not depend on the choice of $\nu$; the result is the same for any $\nu$ such that $\mu$ and $\mu'$ are both absolutely continuous with respect to $\nu$. For any $\mu$ and $\mu'$, there is always at least one such $\nu$, for example $(\mu+\mu')/2$. The Hellinger affinity  gives the following upper bound on the total variation distance. This is a classical result. The proof is reproduced for completeness. 
\begin{lmm}\label{tvbclmm}
For any two probability measures $\mu$ and $\mu'$ defined on the same measurable space,
\[
\tv(\mu, \mu') \le \sqrt{1-\rho(\mu,\mu')^2}. 
\]
\end{lmm}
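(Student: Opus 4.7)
The plan is to pick any $\nu$ dominating both $\mu$ and $\mu'$ (for instance $\nu = (\mu+\mu')/2$), write $f = d\mu/d\nu$ and $g = d\mu'/d\nu$, and reduce both quantities in the statement to integrals against $\nu$. Recall the standard identity $\tv(\mu,\mu') = \tfrac{1}{2}\int_\Omega |f - g|\,d\nu$, while by definition $\rho(\mu,\mu') = \int_\Omega \sqrt{fg}\,d\nu$. The natural algebraic bridge between $|f-g|$ and $\sqrt{fg}$ is the factorization
\[
|f - g| \;=\; |\sqrt{f} - \sqrt{g}|\cdot(\sqrt{f} + \sqrt{g}),
\]
which holds pointwise (note both factors on the right are nonnegative).

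Once this is in hand, the next step is to apply the Cauchy--Schwarz inequality to this product, viewing $|\sqrt{f} - \sqrt{g}|$ and $\sqrt{f} + \sqrt{g}$ as functions in $L^2(\nu)$. This gives
\[
\Big(\tfrac{1}{2}\int_\Omega |f-g|\,d\nu\Big)^{\!2} \;\le\; \tfrac{1}{4}\int_\Omega (\sqrt{f}-\sqrt{g})^2\,d\nu \;\cdot\; \int_\Omega (\sqrt{f}+\sqrt{g})^2\,d\nu.
\]

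Finally I would evaluate the two integrals on the right using $\int f\,d\nu = \int g\,d\nu = 1$ together with the definition of $\rho$. Expanding the squares yields
\[
\int_\Omega (\sqrt{f} \mp \sqrt{g})^2\,d\nu \;=\; 2 \mp 2\rho(\mu,\mu'),
\]
so the product of the two integrals equals $4(1 - \rho(\mu,\mu')^2)$. Substituting back produces $\tv(\mu,\mu')^2 \le 1 - \rho(\mu,\mu')^2$, and taking square roots finishes the proof. There is no real obstacle here: the only thing one must be mildly careful about is checking that the chosen $\nu$ dominates both measures and that the Cauchy--Schwarz step is applied with the correct decomposition of $|f-g|$; everything else is a one-line computation.
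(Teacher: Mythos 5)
Your proof is correct and is essentially the same as the paper's: both rewrite $\tv(\mu,\mu') = \tfrac12\int|f-g|\,d\nu$, factor $|f-g| = |\sqrt f - \sqrt g|(\sqrt f + \sqrt g)$, apply Cauchy--Schwarz in $L^2(\nu)$, and evaluate the two resulting integrals as $2\mp 2\rho(\mu,\mu')$ using the normalization $\int f\,d\nu=\int g\,d\nu=1$. The only cosmetic difference is that the paper leaves the bound in the factored form $(1-\rho)^{1/2}(1+\rho)^{1/2}$ rather than multiplying out to $\sqrt{1-\rho^2}$.
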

\begin{proof}
By the Cauchy--Schwarz inequality,
\eq{
\tv(\mu, \mu') &= \frac{1}{2}\int|f-g|\,d \nu \\
&= \frac{1}{2}\int \bigl|\bigl(\sqrt{f}-\sqrt{g}\bigr)\bigl(\sqrt{f}+\sqrt{g}\bigr)\bigr| \, d\nu\\
&\le \frac{1}{2}\biggl(\int \bigl(\sqrt{f}-\sqrt{g}\bigr)^2 \, d\nu \int \bigl(\sqrt{f}+\sqrt{g}\bigr)^2 \, d\nu \biggr)^{1/2}\\
&= \bigl(1-\rho(\mu, \mu')\bigr)^{1/2}\bigl(1+\rho(\mu, \mu')\bigr)^{1/2},
}
where the last step follows because $\int fd \nu=\int gd \nu=1$.
\end{proof}
The main advantage of using the Hellinger affinity is that it is easy to evaluate for product measures. For $i=1,\ldots, n$, let $\mu_i$ and $\mu_i'$ be probability measures on some measurable space $(\Omega_i, \mf_i)$.  Let $\mu = \mu_1\times \cdots \times \mu_n$ and $\mu' = \mu_1'\times \cdots \times \mu_n'$ be the corresponding product measures on $\Omega_1\times \cdots \times \Omega_n$. Then from the definition of the Hellinger affinity it is clear that
\eq{
\rho(\mu, \mu') &= \prod_{i=1}^n \rho(\mu_i, \mu_i'). 
}
Combining this with Lemma \ref{tvbclmm}, we get the following upper bound for the total variation distance between product measures.
\begin{lmm}\label{tvprod}
Let $\mu$ and $\mu'$ be as in the above paragraph. Then
\[
\tv(\mu, \mu') \le \biggl(1-\prod_{i=1}^n\rho(\mu_i, \mu_i')^2\biggr)^{1/2}. 
\]
\end{lmm}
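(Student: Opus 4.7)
The plan is to chain together Lemma \ref{tvbclmm} with the multiplicativity of the Hellinger affinity over product measures, which is exactly the identity displayed in the paragraph just before the statement. So the argument has only two ingredients, and the bulk of the work is already done.

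First, I would fix a dominating measure on each coordinate: for each $i$, pick $\nu_i$ such that $\mu_i$ and $\mu_i'$ are both absolutely continuous with respect to $\nu_i$ (for instance $\nu_i = (\mu_i+\mu_i')/2$), and let $f_i, g_i$ be the corresponding densities. Then $\nu := \nu_1\times\cdots\times\nu_n$ dominates both $\mu$ and $\mu'$, with densities $f(x_1,\ldots,x_n) = \prod_i f_i(x_i)$ and $g(x_1,\ldots,x_n) = \prod_i g_i(x_i)$ respectively.

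Second, I would verify the factorization of the Hellinger affinity by a direct application of Tonelli's theorem (all integrands are nonnegative):
\[
\rho(\mu,\mu') = \int \sqrt{fg}\, d\nu = \int \prod_{i=1}^n \sqrt{f_i(x_i)g_i(x_i)}\, d\nu(x_1,\ldots,x_n) = \prod_{i=1}^n \int \sqrt{f_ig_i}\, d\nu_i = \prod_{i=1}^n \rho(\mu_i,\mu_i').
\]
This is precisely the identity already stated (without proof) just above Lemma \ref{tvprod}.

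Third, I would apply Lemma \ref{tvbclmm} to the pair $(\mu,\mu')$, which gives $\tv(\mu,\mu') \le \sqrt{1-\rho(\mu,\mu')^2}$, and then substitute the product expression from the previous step to obtain the claim. There is no real obstacle here; the only minor points to be careful about are the existence of a common dominating measure on each coordinate (handled as above) and the justification of Fubini/Tonelli in the factorization step (immediate from nonnegativity). So the proof is essentially a one-line combination of Lemma \ref{tvbclmm} with the product formula for $\rho$.
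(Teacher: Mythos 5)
Your proposal is correct and follows exactly the paper's route: the paper derives Lemma \ref{tvprod} by combining the multiplicativity of the Hellinger affinity over product measures (stated just above the lemma) with Lemma \ref{tvbclmm}, which is precisely your argument. The extra care you take with the common dominating measure and Tonelli is a fine, if routine, filling-in of the details the paper treats as clear from the definition.
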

Let us see what this lemma gives for the example from Section \ref{lemmasec}. In that example, a simple calculation shows that 
\[
\rho(\ml_{X_i}, \ml_{X_i'}) = \frac{\sqrt{1+\ep}+\sqrt{1-\ep}}{2}\ge 1- C\ep^2 = 1-\frac{C\alpha^2}{n},
\]
where $C$ is a positive constant that does not depend on $n$ or $\alpha$.
Thus, 
\[
\tv(\ml_{(X_1,\ldots,X_n)}, \ml_{(X_1',\ldots,X_n')}) \le \biggl(1- \biggl(1-\frac{C\alpha^2}{n}\biggr)^{2n}\biggr)^{1/2}\le C'\alpha,
\]
where $C'$ is another constant that does not depend on $n$ or $\alpha$. This recovers a stronger version of the result that we previously derived using the explicit formula for the total variation distance and Chebychev's inequality. 

\subsection{Perturbative coupling}\label{pertsec}
To apply Lemma \ref{keylmm}, we need to get upper bounds on the total variation distance between the law of a random variable and the law of a small perturbation of the variable. We have dealt with perturbations of independent Bernoulli random variables in the previous section. This section gives a similar bound for continuous random vectors, where the perturbation is of a different nature. The bound is derived using Lemma \ref{tvprod}. We will deal with the following classes of probability measures on $\rr^d$, $d\ge 1$.
\begin{defn}
Let $\cp(d)$ denote the set of all probability densities of the form $e^{-V}$ on $\rr^d$, where  $V$ is a $C^\infty$ function, $V$ and all its derivatives have at most polynomial growth at infinity, and $e^{V}$ increases faster than any polynomial at infinity. 
\end{defn}
\begin{defn}
Let $\cp^+(d)$ denote the set of all probability densities of the form $e^{-V}$  on $[0,\infty)^d$, such that $V$ is a $C^\infty$ function in $(0,\infty)^d$, $V$ and all its derivatives have at most polynomial growth at infinity and extend continuously to the boundary of $[0,\infty)^d$, and $e^{V}$ increases faster than any polynomial at infinity.
\end{defn} 
Many of the familiar probability distributions, such as Gaussian distributions and exponential distributions, are of the above type. Some other commonly used distributions, such as uniform distributions on compact sets, do not belong to the above classes. 

The following theorem gives a lower bound on the Hellinger affinity between a probability measure in $\cp(d)$  or $\cp^+(d)$ and a certain kind of perturbation of that measure. The proof is based on an application of the divergence theorem. This result is frequently used in later sections.
\begin{thm}\label{multthm}
Let $X$ be a $d$-dimensional random vector whose law belongs to either $\cp(d)$ or $\cp^+(d)$. Take any $\ep \in (-1/2,1/2)$, and let $X' = X/(1+\ep)$. Then
\[
\rho(\ml_X, \ml_{X'}) \ge 1-C\ep^2
\]
where $C$ depends only on $\ml_X$ and $d$.
\end{thm}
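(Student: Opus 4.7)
The plan is to establish $\rho(0) = 1$, $\rho'(0) = 0$, and a uniform bound $|\rho''(\ep)| \le C$ on $(-1/2, 1/2)$, so that Taylor's theorem gives $\rho(\ep) = 1 + \tfrac{1}{2}\rho''(\xi)\ep^2 \ge 1 - (C/2)\ep^2$. Writing $p(x) = e^{-V(x)}$ for the density of $X$ and $p_\ep(x) = (1+\ep)^d e^{-V((1+\ep)x)}$ for the density of $X'$, the Hellinger affinity reads
\[
\rho(\ep) = (1+\ep)^{d/2} \int e^{-V(x)/2 - V((1+\ep)x)/2} \, dx.
\]
The integrand is smooth in $\ep$, and each $\ep$-derivative is a polynomial in $x$ (built from $\nabla V((1+\ep)x)$ and $\hess V((1+\ep)x)$, with coefficients bounded uniformly on $(-1/2,1/2)$) times $e^{-V((1+\ep)x)/2}$; differentiation under the integral is justified by the dominated-convergence estimates below.

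The step where the divergence theorem enters is the identity $\rho'(0) = 0$. Differentiating and setting $\ep = 0$ yields
\[
\rho'(0) = \int \left(\frac{d}{2} - \frac{x \cdot \nabla V(x)}{2}\right) e^{-V(x)} \, dx.
\]
Since $\partial_{x_i}(e^{-V(x)}) = -(\partial_{x_i} V(x)) e^{-V(x)}$, integration by parts gives $\int x_i (\partial_{x_i} V(x)) e^{-V(x)} \, dx = \int e^{-V(x)} \, dx = 1$; summing over $i$ produces $\int (x \cdot \nabla V(x)) e^{-V(x)} \, dx = d$, so $\rho'(0) = d/2 - d/2 = 0$. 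The boundary contributions vanish at infinity because $e^{-V}$ decays faster than any polynomial, and (in the $\cp^+(d)$ case) on each face $\{x_i = 0\}$ because the factor $x_i$ itself vanishes there.

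For the uniform bound on $|\rho''(\ep)|$, the second derivative is a finite sum of terms of the form $\int P(x) e^{-V(x)/2 - V((1+\ep)x)/2} \, dx$, where $P$ is a polynomial whose coefficients are bounded uniformly on $(-1/2,1/2)$. By Cauchy--Schwarz,
\[
\left| \int P(x) e^{-V(x)/2 - V((1+\ep)x)/2} \, dx \right| \le \left( \int P(x)^2 e^{-V(x)} \, dx \right)^{1/2} (1+\ep)^{-d/2},
\]
where the second factor comes from the change of variables $y = (1+\ep)x$ in $\int e^{-V((1+\ep)x)}\, dx$. The first factor is finite since $e^{-V}$ decays faster than any polynomial, and the second is bounded on $(-1/2, 1/2)$. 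Hence $|\rho''(\ep)| \le C$ where $C$ depends only on $\ml_X$ and $d$.

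The main obstacle is just careful bookkeeping of the polynomial prefactors produced by successive differentiation, and verifying their uniform integrability against $e^{-V(x)/2 - V((1+\ep)x)/2}$; the substantive content is the divergence-theorem identity that forces $\rho'(0)$ to vanish, expressing the first-order invariance of the Hellinger affinity under an infinitesimal rescaling of a density with sufficient smoothness and decay.
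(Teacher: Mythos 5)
Your proof is correct and follows essentially the same route as the paper: write the Hellinger affinity as a smooth function $g(\ep)$ of the perturbation parameter, show $g(0)=1$ and $g'(0)=0$ via the divergence theorem / integration by parts (with boundary terms vanishing by the decay of $e^{-V}$, and by the factor $x_i$ on the coordinate faces in the $\cp^+(d)$ case), and then conclude from a uniform bound on $g''$ over $(-1/2,1/2)$ using Taylor's theorem. One small imprecision worth noting: the prefactors $P(x)$ you extract from $g''$ are not literally polynomials in $x$ (they involve $\nabla V((1+\ep)x)$ and $\hess V((1+\ep)x)$, which need not be polynomial), but they do have polynomial growth in $x$ uniformly over $\ep\in(-1/2,1/2)$, which is exactly what your Cauchy--Schwarz estimate against $e^{-V(x)}$ requires.
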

\begin{proof}
Let $e^{-V}$ be the probability density function of $X$. In the following, all integrals are over $\rr^d$ if $e^{-V}\in \cp(d)$, and over $[0,\infty)^d$ if $e^{-V}\in \cp^+(d)$. Let
\eq{
g(\ep) := \int \sqrt{(1+\ep)^de^{-V(x+\ep x)}e^{-V(x)}} \,d x = \rho(\ml_X, \ml_{X'}).
}
By the inequality 
\[
|e^x-e^y|\le \frac{1}{2}|x-y|(e^x+e^y), 
\]
we have that for any $\ep_1$ and $\ep_2$,
\eq{
&|e^{-V(x+\ep_1 x)} - e^{-V(x+\ep_2 x)}|\\
&\le \frac{1}{2}|V(x+\ep_1x)-V(x+\ep_2x)|(e^{-V(x+\ep_1 x)} + e^{-V(x+\ep_2 x)}).
}
Using this inequality and the assumptions on $V$, it is not difficult to verify that $g$ is a $C^\infty$ function on $(-1/2,1/2)$, and the derivatives can be computed by differentiating under the integral. Note that $g(0)=1$ and 
\eq{
g'(\ep) &= \frac{d}{2}(1+\ep)^{-1}g(\ep) \\
&\qquad - \frac{1}{2}(1+\ep)^{d/2}\int x\cdot \nabla V(x+\ep x) e^{-(V(x+\ep x)+V(x))/2}\, d x,
}
which gives
\eq{
g'(0)&=\frac{d}{2} - \frac{1}{2}\int x\cdot \nabla V(x) e^{-V(x)} \, d x\\
&=  \frac{1}{2}\int (d-x\cdot \nabla V(x)) e^{-V(x)} \, d x.
}
Let $h = (h_1,\ldots, h_d)$ be the function defined as
\eq{
h_i(x) = x_i e^{-V(x)}.
}
Then 
\eq{
\fpar{h_i}{x_i} = \biggl(1-x_i \fpar{V}{x_i}\biggr) e^{-V(x)},
}
and hence
\eq{
\mathrm{div}\, h(x)= \sum_{i=1}^d\fpar{h_i}{x_i} = (d-x\cdot \nabla V(x)) e^{-V(x)}. 
}
If $e^{-V}\in \cp(d)$, then using the growth assumptions about $V$, a simple application of the divergence theorem now shows that $g'(0)=0$. If $e^{-V}\in \cp^+(d)$, then also the divergence theorem implies that $g'(0)=0$, because $h(x)\cdot n(x)=0$ on the boundary of $[0,\infty)^d$, where $n(x)$ denotes the unit normal vector to the boundary at the point $x$.  Finally, note that
\eq{
g''(\ep) &= -\frac{d}{2}(1+\ep)^{-2} g(\ep) + \frac{d}{2}(1+\ep)^{-1} g'(\ep) \\
&\qquad - \frac{d}{4}(1+\ep)^{(d-2)/2}\int x\cdot \nabla V(x+\ep x) e^{-(V(x+\ep x)+V(x))/2}\, d x \\
&\qquad - \frac{1}{2}(1+\ep)^{d/2}\int \biggl(x\cdot \mathrm{Hess}\, V(x+\ep x)\, x\\
&\qquad \qquad \qquad\qquad  \qquad - \frac{1}{2}(x\cdot \nabla V(x+\ep x))^2\biggr) e^{-(V(x+\ep x)+V(x))/2}\, d x,
}
where $ \mathrm{Hess}\, V$ is the Hessian matrix of $V$. 
By the assumed properties of $V$, the above formulas show that $|g|$, $|g'|$ and $|g''|$ are uniformly bounded in the interval $(-1/2, 1/2)$. Moreover, we have already deduced that $g(0)=1$ and $g'(0)=0$. Thus, for any $\ep\in (-1/2, 1/2)$, 
\eq{
g(\ep)\ge 1-C\ep^2,
}
where $C$ depends only on $V$ and $d$.
\end{proof} 
Combining this theorem with Lemma \ref{tvprod} yields the following corollary.
\begin{cor}\label{multcor}
Let $X_1,\ldots, X_n$ be i.i.d.~$d$-dimensional random vectors with probability density belonging to either $\cp(d)$ or $\cp^+(d)$. Take any $\ep_1,\ldots,\ep_n\in (-1/2,1/2)$ and let $X_i'=X_i/(1+\ep_i)$ for $i=1,\ldots, n$. Then
\eq{
\tv(\ml_{(X_1,\ldots, X_n)}, \ml_{(X_1',\ldots, X_n')})\le C\biggl(\sum_{i=1}^n \ep_i^2\biggr)^{1/2}.
}
where $C$ depends only of the law of the $X_i$'s and the dimension $d$.
\end{cor}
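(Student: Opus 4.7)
The plan is to combine Theorem \ref{multthm} directly with Lemma \ref{tvprod}, since the hypotheses of the corollary are tailored to make this immediate. First I would apply Theorem \ref{multthm} separately to each coordinate pair $(X_i, X_i')$. Because the $X_i$ are i.i.d.\ with common law belonging to $\cp(d)$ or $\cp^+(d)$, the theorem yields a single constant $C_0$, depending only on that common law and on $d$, such that
\[
\rho(\ml_{X_i}, \ml_{X_i'}) \ge 1 - C_0 \ep_i^2
\]
for every $i$ and every $\ep_i \in (-1/2,1/2)$.

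Next I would apply Lemma \ref{tvprod} to the product measures $\ml_{(X_1,\ldots,X_n)} = \prod_{i=1}^n \ml_{X_i}$ and $\ml_{(X_1',\ldots,X_n')} = \prod_{i=1}^n \ml_{X_i'}$, obtaining
\[
\tv(\ml_{(X_1,\ldots,X_n)}, \ml_{(X_1',\ldots,X_n')}) \le \biggl(1 - \prod_{i=1}^n \rho(\ml_{X_i}, \ml_{X_i'})^2\biggr)^{1/2}.
\]
It remains to convert the per-coordinate affinity bound into a tractable estimate on the product. For this I would use the elementary inequality $\prod_{i=1}^n (1-a_i) \ge 1 - \sum_{i=1}^n a_i$ valid whenever each $a_i \in [0,1]$, applied with $a_i = 1 - (1-C_0\ep_i^2)^2 \le 2C_0\ep_i^2$. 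This gives
\[
\prod_{i=1}^n \rho(\ml_{X_i}, \ml_{X_i'})^2 \ge 1 - 2C_0 \sum_{i=1}^n \ep_i^2,
\]
and substituting into the Hellinger bound yields
\[
\tv(\ml_{(X_1,\ldots,X_n)}, \ml_{(X_1',\ldots,X_n')}) \le \sqrt{2C_0} \biggl(\sum_{i=1}^n \ep_i^2\biggr)^{1/2},
\]
which is the desired inequality with $C := \sqrt{2C_0}$.

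There is essentially no obstacle here: the nontrivial analytic work has already been absorbed into Theorem \ref{multthm} (the divergence-theorem computation giving $g'(0)=0$ and the resulting quadratic bound $\rho \ge 1 - C\ep^2$) and into the tensorization property of the Hellinger affinity that underpins Lemma \ref{tvprod}. The only minor point to be careful about is that we must control the product $\prod(1-C_0\ep_i^2)^2$ without any a priori smallness assumption on the individual $\ep_i$'s; this is handled automatically by the elementary $\prod(1-a_i)\ge 1-\sum a_i$ step above, since each $a_i$ lies in $[0,1]$ as long as $C_0\ep_i^2 \le 1$, and in the contrary case $\sum \ep_i^2 \ge 1/C_0$ already makes the right-hand side of the claim trivially larger than $1 \ge \tv$ (after possibly enlarging $C$). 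Hence the constant $C$ depends only on $\ml_{X_1}$ and $d$, as required.
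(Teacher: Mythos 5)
Your proof is correct and follows the same route as the paper's: apply Theorem \ref{multthm} coordinatewise to bound each Hellinger affinity below by $1 - C_0\ep_i^2$, tensorize via Lemma \ref{tvprod}, and then linearize the product with $\prod(1-a_i)\ge 1-\sum a_i$, handling the large-$\ep_i$ regime by the trivial bound $\tv\le 1$. The only cosmetic difference is that the paper phrases the last point as a WLOG reduction to $\sum\ep_i^2\le 1/(4C_0)$ before linearizing, while you split off the contrary case at the end; the substance is identical.
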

\begin{proof}
By Theorem \ref{multthm} and Lemma \ref{tvprod}, 
\[
\tv(\ml_{(X_1,\ldots, X_n)}, \ml_{(X_1',\ldots, X_n')}) \le \biggl(1-\prod_{i=1}^n (1-C_0\ep_i^2)^2\biggr)^{1/2}, 
\]
where $C_0$ depends only on the law of the $X_i$'s and the dimension $d$. Since the total variation distance is bounded by one, we may assume without loss of generality (by suitably increasing the value of $C$ in the statement of the corollary, if necessary) that 
\[
\sum_{i=1}^n \ep_i^2 \le \frac{1}{4C_0}.
\] 
Then by repeated applications of the inequality $(1-x)(1-y)\ge 1-x-y$, which holds for $x,y\in [0,1]$, we get the desired result.
\end{proof}
\subsection{Connection with the Mermin--Wagner theorem}
The technique of using Lemma \ref{keylmm} in conjunction with Lemma \ref{tvprod} and Theorem \ref{multthm} has strong similarities with the celebrated Mermin--Wagner theorem of statistical physics \cite{mermin, merminwagner}. Roughly speaking, the Mermin--Wagner theorem shows that continuous symmetries cannot be spontaneously broken in dimensions $\le 2$. The physics proof of the Mermin--Wagner theorem involves the introduction of a slowly rotating perturbation known as a `spin wave', akin to the perturbation used in Theorem \ref{multthm}. The first rigorous proof of the Mermin--Wagner theorem, due to \citet{mcbryanspencer}, used complex analytic techniques. A later proof, due to \citet{pfister}, used a more transparent argument that resembles the  method of this paper in various aspects. For a modern exposition in probabilistic language, see the lecture notes of \citet{peledspinka}.

\section{Applications}
\subsection{Traveling salesman and minimal matching}
Let $f$ be a measurable real-valued function on $(\rr^d)^n$ such that there is some $r>0$ so that for any $\lambda \ge 0$ and any $x_1,\ldots, x_n\in\rr^d$,
\eeq{
f(\lambda x_1,\ldots, \lambda x_n) = \lambda^r f(x_1,\ldots, x_n).\label{homog}
}
Such functions arise in many geometric combinatorial optimization problems. The length of the optimal traveling salesman path through $x_1,\ldots, x_n$, the length of the minimal spanning tree and the length of the minimal matching (if  $n$ is even) are all examples of functions having the above property, with $r=1$. The volume of the convex hull of $x_1,\ldots, x_n$ is an example that satisfies \eqref{homog} with $r=d$. 

If the points $x_1,\ldots, x_n$ are replaced by i.i.d.~random points $X_1,\ldots, X_n$ drawn from some probability measure on $\rr^d$, the resulting problem is called the stochastic version of the original optimization problem. The stochastic problems have been extensively studied by probabilists. Laws of large numbers and concentration inequalities for upper bounds on fluctuations are well understood due to the works of many authors, and beautifully exposited in the classic monograph of~\citet{steele97}. Yet, a lot remains to be understood. For example, the distribution theories for the stochastic traveling salesman and the stochastic minimal matching problems remain out of the reach of available technology.

Not much is known about lower bounds on fluctuations in the problems where the distribution theory is not understood. For the stochastic traveling salesman problem, the only result on lower bounds that I am aware of is a result of \citet{rhee91}, who proved a fluctuation lower bound of the correct order in dimension two when the points are uniformly distributed in the unit square. Nothing is known about lower bounds in the stochastic minimal matching problem. The following theorem gives a general lower bound for fluctuations when the points are distributed according to some probability density in  $\cp(d)$ or $\cp^+(d)$.
\begin{thm}\label{optthm}
Take any $d\ge 1$. Let $X_1,X_2,\ldots$ be i.i.d.~random vectors with probability density in either $\cp(d)$ or $\cp^+(d)$. For each $n$, let $f_n:(\rr^d)^n\to \rr$ be a function satisfying \eqref{homog} for some fixed $r>0$, and let $L_n := f_n(X_1,\ldots, X_n)$. Let $(t_n)_{n\ge 1}$ be a sequence of positive real numbers such that
\[
\liminf_{n\to \infty}\pp(L_n\ge t_n) >0.
\]
Then $L_n$ has fluctuations of order at least $n^{-1/2} t_n$, in the sense of Definition~\ref{lowdef}. 
\end{thm}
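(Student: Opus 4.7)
The plan is to apply Lemma \ref{keylmm} with a coupling obtained by scaling each $X_i$ by the same factor, exploiting the homogeneity of $f_n$. Set $\ep := \alpha n^{-1/2}$ for a small constant $\alpha>0$ to be chosen, and define $X_i' := X_i/(1+\ep)$ for $i=1,\ldots,n$. Let $L_n' := f_n(X_1',\ldots,X_n')$. The homogeneity assumption \eqref{homog} immediately gives the key identity
\[
L_n' = f_n\bigl(X_1/(1+\ep),\ldots,X_n/(1+\ep)\bigr) = (1+\ep)^{-r} L_n,
\]
so the perturbation of the random vectors translates into an explicit deterministic rescaling of $L_n$, and there is no need to understand $f_n$ beyond its degree of homogeneity.

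Next, I would bound the total variation distance. Since all the $\ep_i$ equal $\alpha n^{-1/2}$, Corollary \ref{multcor} yields
\[
\tv(\ml_{(X_1,\ldots,X_n)}, \ml_{(X_1',\ldots,X_n')}) \le C\bigl(n\cdot \alpha^2/n\bigr)^{1/2} = C\alpha,
\]
where $C$ depends only on the common law of the $X_i$ and on $d$. Since $L_n'$ and $L_n$ are measurable functions of these vectors, the same bound controls $\tv(\ml_{L_n},\ml_{L_n'})$.

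For the displacement on the other side of Lemma \ref{keylmm}, note that
\[
L_n - L_n' = \bigl(1-(1+\ep)^{-r}\bigr) L_n,
\]
and for all sufficiently small $\alpha$ we have $1-(1+\ep)^{-r} \ge \tfrac{r}{2}\ep = \tfrac{r\alpha}{2} n^{-1/2}$. Consequently, on the event $\{L_n \ge t_n\}$,
\[
|L_n - L_n'| \ge \tfrac{r\alpha}{2}\, n^{-1/2} t_n.
\]
Setting $\delta_n := \tfrac{r\alpha}{2}n^{-1/2} t_n$ and writing $p := \liminf_{n\to\infty}\pp(L_n\ge t_n)>0$, this gives $\pp(|L_n-L_n'|\le \delta_n) \le 1 - p + o(1)$.

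Finally, I would plug these estimates into Lemma \ref{keylmm}: for any interval $[a,b]$ with $b-a\le \delta_n$,
\[
\pp(a\le L_n\le b) \le \tfrac{1}{2}\bigl(1 + (1-p+o(1)) + C\alpha\bigr).
\]
Choosing $\alpha$ small enough that $C\alpha < p/2$, the right-hand side is bounded away from $1$ for all large $n$, which is exactly the statement that $L_n$ has fluctuations of order at least $n^{-1/2} t_n$ in the sense of Definition \ref{lowdef}. The only genuinely delicate point is verifying that the Hellinger-based estimate of Corollary \ref{multcor} is uniform in $n$ and survives aggregation over all $n$ coordinates, but this is already packaged by the corollary, so the argument reduces essentially to combining homogeneity with a single application of the coupling lemma.
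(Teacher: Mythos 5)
Your proposal is correct and follows essentially the same route as the paper: scale each $X_i$ by $(1+\alpha n^{-1/2})^{-1}$, use Corollary \ref{multcor} (via the Hellinger bound) to get $\tv \le C\alpha$, invoke the homogeneity identity $L_n' = (1+\ep)^{-r} L_n$ to turn the event $\{L_n \ge t_n\}$ into a deterministic displacement of order $n^{-1/2} t_n$, and finish with Lemma \ref{keylmm} by taking $\alpha$ small. The only slip is cosmetic: the inequality $1-(1+\ep)^{-r} \ge \tfrac{r}{2}\ep$ holds because $\ep=\alpha n^{-1/2}\to 0$ as $n\to\infty$ for fixed $\alpha$, so it should be phrased as holding ``for all sufficiently large $n$'' rather than ``for all sufficiently small $\alpha$''; this does not affect the argument.
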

\begin{proof}
Without loss of generality, let $a$ be a positive constant such that 
\[
\pp(L_n \ge t_n)\ge a
\]
for all $n$. Take any $n$. For $i=1,\ldots, n$, let
\[
X_i' := \frac{X_i}{1+\alpha n^{-1/2}},
\]
where $\alpha$ will be determined later. Let $L_n' := f_n(X_1',\ldots, X_n')$. Then by Corollary \ref{multcor}, 
\eq{
\tv(\ml_{L_n}, \ml_{L_n'})&\le \tv(\ml_{(X_1,\ldots,X_n)}, \ml_{(X_1',\ldots, X_n')}) \le C\alpha,
} 
where $C$ depends only on $\ml_{X_1}$ and $d$. On the other hand, by \eqref{homog},
\[
L_n' = \frac{L_n}{(1+\alpha n^{-1/2})^r}. 
\]
Consequently, for any $\beta$ such that
\[
0<\beta < \frac{(1+\alpha n^{-1/2})^r - 1}{n^{-1/2}(1+\alpha n^{-1/2})^r} = r\alpha + o(1)
\]
(where $o(1)$ denotes a quantity that tends to zero as $n\to \infty$), we have
\eq{
\pp(L_n'\ge L_n - \beta n^{-1/2}t_n) &= \pp\biggl(L_n \le \frac{(1+\alpha n^{-1/2})^r\beta n^{-1/2} t_n}{(1+\alpha n^{-1/2})^r - 1}\biggr)\\
&\le \pp(L_n < t_n) \le 1-a.
}
Thus, choosing $\alpha$ sufficiently small, and then choosing $\beta$  depending on $\alpha$, Lemma~\ref{keylmm} completes the proof. 
\end{proof}
Concretely, Theorem \ref{optthm} gives the following lower bound in the traveling salesman and minimal matching problems.
\begin{cor}
Take any $d\ge 2$. In the stochastic traveling salesman and stochastic minimal matching problems with points drawn according to some probability density in $\cp(d)$ or $\cp^+(d)$, the optimal lengths have fluctuations of order at least $n^{(d-2)/2d}$, in the sense of Definition \ref{lowdef}.
\end{cor}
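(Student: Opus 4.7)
The strategy is to reduce the corollary directly to Theorem \ref{optthm}. Both the TSP tour length and the minimal matching length are $1$-homogeneous in the point coordinates (scaling all points by $\lambda$ scales the optimal tour or matching by $\lambda$), so the hypothesis \eqref{homog} holds with $r = 1$. Theorem \ref{optthm} then yields fluctuations of order at least $n^{-1/2} t_n$ for any sequence $t_n$ satisfying $\liminf_{n\to\infty} \pp(L_n \ge t_n) > 0$. To match the target order $n^{(d-2)/2d}$, I would choose $t_n$ of order $n^{(d-1)/d}$.

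The required lower bound on $L_n$ comes from the Beardwood--Halton--Hammersley law of large numbers for Euclidean TSP, and the analogous result for minimum matching due to Papadimitriou, Rhee, Steele, and others. Under i.i.d.\ sampling from a density $\varphi$ on $\rr^d$ with $d \ge 2$, these theorems assert that
\[
\lim_{n\to\infty} \frac{L_n}{n^{(d-1)/d}} = c(d) \int \varphi(x)^{(d-1)/d}\, d x \quad \textrm{almost surely},
\]
where $c(d) > 0$ depends only on $d$ and the problem. For any $\varphi \in \cp(d) \cup \cp^+(d)$, the integrand is a strictly positive continuous function on an open set, so the right-hand integral is strictly positive; and the polynomial growth of $V$ together with the super-polynomial growth of $e^V$ force rapid decay of $\varphi$ and hence finiteness of the integral. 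Consequently, taking $t_n := \tfrac{1}{2} c(d) \bigl(\int \varphi^{(d-1)/d}\bigr) n^{(d-1)/d}$ gives $\pp(L_n \ge t_n) \to 1$.

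Plugging this $t_n$ into Theorem \ref{optthm} yields fluctuations of order at least $n^{-1/2} \cdot n^{(d-1)/d} = n^{(d-2)/2d}$, completing the argument. The only subtle point to check is the validity of the BHH-type LLN for densities in $\cp(d)$ or $\cp^+(d)$, which may have unbounded support. The original statements are for compactly supported densities, but extensions to densities with rapidly decaying tails are well known (see Steele's monograph); the tail conditions built into $\cp(d)$ and $\cp^+(d)$ are far stronger than what these extensions require. This is the main place where one has to do actual work beyond a mechanical invocation of previous results, though the work is routine.
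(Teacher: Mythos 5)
Your proposal is correct, but it takes a different route to the hypothesis of Theorem \ref{optthm} than the paper does. You invoke the Beardwood--Halton--Hammersley law of large numbers (and its analogue for minimal matching) to get $L_n/n^{(d-1)/d}$ converging almost surely to a positive constant, which forces you to verify that these LLNs extend to densities with unbounded support --- true, and you rightly flag it as the one place requiring real work, but it means leaning on a fairly heavy piece of machinery and on extensions of it beyond the compactly supported case. The paper instead observes that both optimal lengths are bounded below by a constant multiple of the sum of nearest-neighbor distances, and that a bare mean-and-variance (Chebychev-type) calculation on that sum already gives $\pp(L_n \ge c\, n^{1-1/d})$ bounded away from zero; the only property of the density used is that it is bounded, which is immediate from membership in $\cp(d)$ or $\cp^+(d)$. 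Since Theorem \ref{optthm} only asks for $\liminf \pp(L_n \ge t_n) > 0$ rather than convergence to $1$, the paper's weaker, elementary estimate suffices and keeps the argument self-contained. Your approach buys a cleaner statement (probability tending to one, with an explicit limiting constant) at the cost of an external theorem whose applicability to $\cp(d)$ and $\cp^+(d)$ must be checked; the paper's buys minimal hypotheses and a two-line verification. Both are valid proofs of the corollary.
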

\begin{proof}
In the stochastic traveling salesman problem and the stochastic minimal matching problems in $d\ge 2$, it is not difficult to see that the optimal values are bounded below by some constant multiple of the sum of nearest-neighbor distances. From this, it follows by a simple mean and variance calculation that $t_n$ can be chosen to be $n^{1-1/d}$, since there are $n$ points and the nearest neighbor distances are of order at least $n^{-1/d}$ (because a density in $\cp(d)$ or $\cp^+(d)$ is necessarily bounded). With this choice of $t_n$, Theorem \ref{optthm} implies that the order of fluctuations is at least $n^{-1/2} n^{1-1/d} = n^{(d-2)/2d}$. 
\end{proof}
Interestingly, the lower bound in the above corollary matches the known order of upper bounds on fluctuations in these problems~\cite{steele97}. The known upper bounds, however, are for points drawn from the uniform distribution on $[0,1]^d$. I do not know if upper bounds are known for unbounded distributions, such as the ones in~$\cp(d)$ and $\cp^+(d)$.

The next theorem gives the optimal lower bound for the traveling salesman problem when the points are distributed uniformly in $[0,1]^d$. This is a straightforward generalization of the lower bound obtained by \citet{rhee91} in $d=2$. The coupling used in the proof of this theorem is borrowed from Rhee's paper. I do not know if a simpler coupling can be made to work.
\begin{thm}
Take any $d\ge 2$. Let $L_n$ be the length of the optimal tour in the stochastic traveling salesman problem with $n$ points when the points are distributed independently and uniformly in $[0,1]^d$. Then $L_n$ has fluctuations of order at least $n^{(d-2)/2d}$, in the sense of Definition \ref{lowdef}.
\end{thm}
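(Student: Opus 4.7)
The plan is to apply Lemma~\ref{keylmm} in the same spirit as the proof of Theorem~\ref{optthm}, adapting the coupling to the uniform law on $[0,1]^d$, which lies in neither $\cp(d)$ nor $\cp^+(d)$. The global scaling $X_i\mapsto X_i/(1+\alpha n^{-1/2})$ of Theorem~\ref{optthm} is unavailable here: one checks directly that the Hellinger affinity between the uniform law on $[0,1]^d$ and its shrunk image is $(1+\alpha n^{-1/2})^{-d/2}$, whose $n$-th power tends to zero, so the product total variation tends to $1$ and Lemma~\ref{keylmm} becomes vacuous for any choice of $\alpha$ small enough to give a useful length change.

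The substitute coupling, borrowed from~\cite{rhee91}, is of the form $X_i' := T(X_i)$ for a specific smooth transformation $T:[0,1]^d\to [0,1]^d$ that is exactly (or nearly) measure-preserving, so that the joint law of $(X_1',\ldots,X_n')$ is equal (or close in total variation) to the joint law of $(X_1,\ldots,X_n)$. In the measure-preserving case, $\tv=0$ and Lemma~\ref{keylmm} reduces to the requirement that $|L_n - L_n'|\ge c\,n^{(d-2)/2d}$ with positive probability. The $T$ I would use is designed so that on a bulk sub-region $K\Subset (0,1)^d$ it acts as a dilation or contraction about the center of $K$ by a factor $1\pm\alpha n^{-1/2}$, with a compensating distortion confined to a thin peripheral shell that restores volume preservation on all of $[0,1]^d$. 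Rhee's paper gives the analogous construction in $d=2$; its extension to $d\ge 2$ is notational rather than conceptual.

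The lower bound on $|L_n - L_n'|$ then parallels the homogeneity argument in Theorem~\ref{optthm}: the portion of the optimal tour running through the $\Theta(n)$ points in $K$ has length $\Theta(n^{1-1/d})$ by Beardwood--Halton--Hammersley type asymptotics, and rescaling the pairwise distances of those points by $1\pm\alpha n^{-1/2}$ changes that portion by $\Theta(\alpha n^{-1/2}\cdot n^{1-1/d}) = \Theta(n^{(d-2)/2d})$. The \emph{main obstacle} is making this decomposition of the global TSP length rigorous: one must show that both $L_n$ and $L_n'$ decompose, up to $o_\pp(n^{(d-2)/2d})$ errors, into an inside-$K$ sub-tour plus an outside-$K$ sub-tour plus a negligible number of boundary crossings, and moreover that the bulk change and the compensating-shell change do not cancel. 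The quantitative subadditive and BHH-type estimates in~\cite{steele97}, combined with the explicit geometric design of $T$ tailored to break cancellation between the bulk and shell contributions, handle this step. Once the decomposition is in place, one obtains $|L_n - L_n'|\ge c\,n^{(d-2)/2d}$ with probability bounded away from zero, and Lemma~\ref{keylmm} yields the claimed fluctuation lower bound.
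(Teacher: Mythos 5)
Your proposal has a genuine gap, and it also misremembers what Rhee's coupling actually is. The coupling the paper borrows from Rhee is not a deterministic (near-)measure-preserving map of $[0,1]^d$; it is a \emph{stochastic resampling} coupling: keep $X_1,\ldots,X_m$ ($m\approx n/2$) fixed, and for each of the remaining points, with probability $\beta n^{-1/2}$ replace it by a fresh point drawn uniformly from the $\alpha n^{-1/d}$-neighborhood $D$ of $\{X_1,\ldots,X_m\}$. This has two crucial features your $T$ lacks: the per-coordinate Hellinger affinity is $1-O(\beta^2/(n\vol(D)))$ so the product TV is controllable, and the change in $L_n$ is \emph{one-sided with high probability} --- deleting each of the $\Theta(\beta\sqrt n)$ resampled points saves $\gtrsim \gamma n^{-1/d}$, while re-inserting its replacement (within $\alpha n^{-1/d}$ of an existing point) costs only $\lesssim \alpha n^{-1/d}$, so $L_n - L_n'\gtrsim \beta\gamma n^{1/2-1/d}$ once $\alpha$ is taken small relative to $\gamma$.

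Your ``dilate the bulk, compensate in a shell'' map has exactly the cancellation you flag as the main obstacle, and that cancellation is not a nuisance to be engineered away --- it is forced. If the perturbation vector field $u$ (with $T(x)=x+\ep u(x)$) is divergence-free (which is what ``measure-preserving to first order'' means, and is also exactly what you need for the single-coordinate Hellinger affinity to be $1-O(\ep^2)$ rather than $1-O(\ep)$), then integration by parts gives $\ee\bigl[\sum_i u(X_i)\cdot\nabla_i L_n\bigr]=0$, i.e.\ $\ee[L_n-L_n']=o(\ep n^{1-1/d})$. Concretely, the bulk contributes $+\ep n^{1-1/d}v$ and the compensating shell contributes $-\ep n^{1-1/d}v$; these cancel exactly at leading order, not approximately. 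What remains is the \emph{fluctuation} of $\sum_i u(X_i)\cdot\nabla_i L_n$ about its near-zero mean, and lower-bounding that is a variance lower bound of precisely the type that Lemma~\ref{keylmm} is meant to let you avoid. Asserting that ``the explicit geometric design of $T$ tailored to break cancellation'' handles this step is not a proof; there is no choice of divergence-free $u$ that makes the first-order mean contribution survive. The resampling coupling sidesteps the whole issue by being non-invertible and monotone, which is why it is the right tool here.
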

\begin{proof}
Throughout this proof, $C$ will denote any positive universal constant, whose value may change from line to line.

Let $X_1,\ldots, X_n$ be i.i.d.~uniform points from $[0,1]^d$. Assume that $n\ge 4$ and let $m = [n/2]$. Given $X_1,\ldots, X_m$, let $D$ be the set of all points in $[0,1]^d$ that are within distance $\alpha n^{-1/d}$ from the set $\{X_1,\ldots, X_m\}$, where $\alpha\in (0,1)$ will be chosen later. Generate $Y_{m+1}, \ldots, Y_n$ independently and uniformly from the set $D$. For each $m+1\le i\le n$, let 
\[
X_i' :=
\begin{cases}
X_i &\text{ with probability $1-\beta n^{-1/2}$,}\\
Y_i &\text{ with probability $\beta n^{-1/2}$,}
\end{cases}
\]
where $\beta\in (0,1)$ will be chosen later. For $1\le i\le m$, let $X_i' := X_i$. Let $L_n$ be the length of the optimal tour through $X_1,\ldots, X_n$ and let $L_n'$ be the length of the optimal tour through $X_1',\ldots, X_n'$. 

Given $X_1,\ldots, X_m$, the random variables $X_{m+1}',\ldots, X_n'$ are i.i.d.~with probability density function
\eq{
1-\beta n^{-1/2} + \frac{\beta n^{-1/2} }{\vol(D)} 1_{\{x\in D\}}
}
at $x\in [0,1]^d$. From this formula and the inequality $\sqrt{1+a}\ge 1+a/2 - Ca^2$ that holds for $a\ge -1/2$, an easy calculation shows that the Hellinger affinity between this conditional law and the uniform distribution on $[0,1]^d$ is bounded below by
\eq{
1-\frac{C\beta^2}{\vol(D)n}.
}
Therefore by Lemma \ref{tvprod}, the total variation distance between the conditional law of $(X_{m+1}',\ldots, X_n')$ and that of $(X_{m+1},\ldots, X_n)$ is bounded above~by 
\[
\frac{C\beta}{\sqrt{\vol(D)}}. 
\]
It is not difficult to show (for example, as in the proof of \cite[Lemma 7]{rhee91}) that with probability tending to one as $n\to \infty$, $\vol(D)\ge C\alpha^d$. Combining this with the above bound on the conditional laws, it follows that
\eeq{
\tv(\ml_{(X_1,\ldots, X_n)}, \ml_{(X_1',\ldots, X_n')}) \le \frac{C\beta}{\alpha^{d/2}}. \label{tsptv}
}
Consider the optimal tour through $X_1,\ldots, X_n$. Each  $X_i$ has two `neighbors' in this tour, one which comes before it and one that comes after. Call these points $U_i$ and $V_i$. If the point $X_i$ is erased, then the length of the optimal tour must decrease by at least 
\[
\|X_i-U_i\|+\|X_i-V_i\| - \|U_i-V_i\|. 
\]
Let $K$ be a positive real number, to be chosen later. It is known that with probability tending to one, the optimal tour has length $\le Cn^{1-1/d}$ \cite[Chapter 2]{steele97}. If this happens, then the average length of an edge in the tour is bounded above by $Cn^{-1/d}$, and hence the fraction of all $i$ for which 
\eeq{
\max\{\|X_i-U_i\|,\, \|X_i-V_i\|\} \le Kn^{-1/d}\label{xuv}
}
is bounded below by $1-CK^{-1}$. 

For each $i$, let $N_i$ be the set of all $j\ne i$ such that $\|X_i-X_j\|\le Kn^{-1/d}$. Let
\[
R_i := \min\{\|X_i-X_j\|+\|X_i-X_k\|-\|X_j-X_k\|: j,k\in N_i\},
\]
where we follow the usual convention that the minimum of an empty set is infinity. Let $\gamma$ be a positive real number, to be chosen later. From the local structure of a set of uniformly distributed points, it is not difficult to prove that with probability tending to one as $n\to \infty$, the fraction of all $i$ for which $R_i\ge \gamma n^{-1/d}$ is at least $c(\gamma,K)$, where $c(\gamma,K)\to 1$ as $\gamma \to 0$ for any fixed $K$. 

Finally, note that if the point $X_i$ is dropped, and \eqref{xuv} holds, then the length of the optimal tour decreases by at least $R_i$. 

Combining all of the above observations, we see that if $K$ is chosen large enough, and then $\gamma$ is chosen small enough depending on $K$, then with probability tending to one as $n\to \infty$, there are at least $3n/4$ points $X_i$ such that \eqref{xuv} holds and $R_i\ge \gamma n^{-1/d}$. Let $A$ be the set of all $i$ such that $X_i$ has these two properties.

Let $B$ be the set of all $m+1\le i\le n$ such that $X_i'=Y_i$. Then from the conclusion of the previous paragraph, it follows that with probability tending to one, $|A\cap B|\ge C\beta n^{1/2}$. Also, the expected number of pairs of points in $B$ that are neighbors of each other in the optimal tour is of order $1$ as $n\to \infty$. To see this, just note that $B$ is a randomly chosen subset of size $O(\sqrt{n})$ and compute a straightforward bound on the conditional expectation of the number of such pairs given the $X_i$'s. From these two observations, it follows that with probability tending to one, deleting $\{X_i:i\in B\}$ results in a decrease of at least $C\beta \gamma n^{1/2-1/d}$ in the length of the optimal tour. 

After dropping $\{X_i:i\in B\}$, let us now replace these points by $\{Y_i:i\in B\}$. Note that the resulting point set is exactly $\{X_1',\ldots, X_n'\}$.  Each $Y_i$ is within distance $\alpha n^{-1/d}$ of some $X_j$, and therefore adds at most $2\alpha n^{-1/d}$ to the length of the optimal tour. Consequently, with probability tending to one, the total increase after inserting all the $Y_i$'s is at most $C\alpha \beta n^{1/2-1/d}$. Thus, choosing $\alpha$ sufficiently small (depending only on $\gamma$), we can ensure that with probability tending to one, 
\[
L_n-L_n'\ge C\beta n^{1/2-1/d}.
\] 
Finally, choose $\beta$ small enough (depending on $\alpha$) so that the right side of~\eqref{tsptv} is less than $1/2$. Lemma \ref{keylmm} now completes the proof.  
\end{proof}

\subsection{Free energy of the Sherrington--Kirkpatrick model}
Let $n$ be any positive integer. Let $(g_{ij})_{1\le i<j\le n}$ be i.i.d.~standard Gaussian random variables. The Sherrington--Kirkpatrick (S-K)  model of spin glasses~\cite{sherringtonkirkpatrick} with $n$ spins at inverse temperature $\beta\ge 0$ and external field $h\in \rr$ defines a random probability measure on $\{-1,1\}^n$, which puts mass proportional to $e^{\beta H_n(\sigma)}$ at each point $\sigma= (\sigma_1,\ldots, \sigma_n)\in \{-1,1\}^n$, where 
\[
H_n(\sigma) := \frac{1}{\sqrt{n}}\sum_{1\le i<j\le n} g_{ij}\sigma_{i} \sigma_{j} + h \sum_{i=1}^n \sigma_i.
\]
The S-K model has inspired a large body of work in probability theory, extensively surveyed in~\cite{talagrandbook1, talagrandbook2, talagrandbook3, panchenkobook}. One of the key quantities of interest is the free energy of the model, defined as
\eq{
F_n(\beta,h) := \log \sum_{\sigma \in \{-1,1\}^n} e^{\beta H_n(\sigma)}. 
}
For $\beta<1$ and $h=0$, the fluctuations of the free energy are well understood due to the work of \citet{alr}. In this case, the free energy has fluctuations of order $1$ as $n\to \infty$, and satisfies a central limit theorem after centering. When $\beta > 1$ and $h=0$, the best known upper bound on the order of fluctuations is $\sqrt{n/\log n}$~\cite{chatterjee09, chatterjeebook}. I do not know of a definite conjecture about the true order of fluctuations. For $h\ne 0$, \citet{chenetal} have recently proved a central limit theorem for $F_n(\beta,h)$ for any $\beta$, showing that it has fluctuations of order $\sqrt{n}$. When $h=0$, I have heard it said that the fluctuations may be of order $1$, but I have also heard it said that the fluctuations are of order $n^\rho$ for some small $\rho$. The following result shows that in the absence of an external field, the fluctuations are at least of order~$1$.
\begin{thm}\label{skthm}
The free energy of the S-K model at zero external field and any inverse temperature $\beta$, has fluctuations of order at least $1$, in the sense of Definition \ref{lowdef}. 
\end{thm}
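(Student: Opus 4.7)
The plan is to apply Lemma~\ref{keylmm} with a multiplicative rescaling of the Gaussian disorder. By the distributional symmetry $g_{ij}\leftrightarrow -g_{ij}$, we may assume $\beta>0$ (the statement is vacuous at $\beta=0$, where $F_n=n\log 2$ is deterministic). Fix a small constant $c>0$ to be chosen later, set $\ep = c/n$, and define a perturbed disorder $g_{ij}' := g_{ij}/(1+\ep)$. Let $\widetilde F_n$ be the free energy computed from $(g_{ij}')$. Since the disorder is a product of $\binom{n}{2}$ i.i.d.~standard Gaussians (whose density lies in $\cp(1)$), Corollary~\ref{multcor} yields
\[
\tv(\ml_{(g_{ij})},\ml_{(g_{ij}')}) \le C\Bigl(\tbinom{n}{2}\Bigr)^{1/2}\ep = C'c,
\]
which can be made less than $1/3$ by choosing $c$ sufficiently small.

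Because $h=0$, the perturbed Hamiltonian equals $H_n(\sigma)/(1+\ep)$, so $\widetilde F_n = F_n(\beta/(1+\ep))$, and the coupling amounts to comparing the free energy at two nearby inverse temperatures. The function $t\mapsto F_n(t)$ is convex, and the $\sigma\to -\sigma$ symmetry (which uses $h=0$) forces the derivative at $0$ to vanish: $F_n'(0)=\langle H_n\rangle_0= 2^{-n}\sum_\sigma H_n(\sigma)=0$. Hence $\langle H_n\rangle_t\ge 0$ for all $t\ge 0$. Convexity of $F_n$ then yields
\[
F_n(\beta) - \widetilde F_n \ge \frac{\beta\ep}{1+\ep}\,\langle H_n\rangle_{\beta/(1+\ep)},
\]
so it remains to show that $\langle H_n\rangle_{\beta/(1+\ep)}$ is of order $n$ with high probability.

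For this, invoke the convergence $F_n(t)/n\to p(t)$ in probability (Aizenman--Lebowitz--Ruelle at high temperature; Guerra--Talagrand--Panchenko in general), where $p$ is convex with $p(0)=\log 2$ and $p(t)>\log 2$ for every $t>0$; a short convexity argument then shows $p$ is strictly increasing on $(0,\infty)$. Fix any $\delta\in(0,\beta/2)$; for all $n$ large enough that $\beta/(1+\ep)\ge \beta/2$, monotonicity of $F_n'$ and convexity give
\[
\langle H_n\rangle_{\beta/(1+\ep)} \ge \langle H_n\rangle_{\beta/2} \ge \frac{F_n(\beta/2) - F_n(\beta/2-\delta)}{\delta}.
\]
Dividing by $n$ and passing to the limit, the right-hand side converges in probability to $(p(\beta/2)-p(\beta/2-\delta))/\delta>0$. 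Therefore with probability tending to $1$ one has $F_n-\widetilde F_n \ge \delta_0$ for some fixed $\delta_0>0$ independent of $n$, and Lemma~\ref{keylmm}, together with the total variation bound from the first paragraph, gives the desired uniform upper bound on $\pp(F_n\in I)$ for every interval $I$ of length at most $\delta_0$.

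The delicate point is the last step: converting the averaged convergence of $F_n/n$ into a \emph{probabilistic} constant-order lower bound on the disorder-averaged energy $\langle H_n\rangle$. Convexity is the essential bridge, as it forces $\langle H_n\rangle_{\beta/2}$ to dominate the secant slope of $F_n$ across a macroscopic inverse-temperature interval whose endpoints each concentrate around their deterministic limits. The remaining ingredients---the Gaussian perturbation bound from Corollary~\ref{multcor}, the scaling identity $\widetilde F_n = F_n(\beta/(1+\ep))$, and the vanishing of $F_n'(0)$ by spin-flip symmetry---are straightforward consequences of features specific to the zero-field S--K model.
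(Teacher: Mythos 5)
Your proof is correct and follows essentially the same route as the paper: a multiplicative Gaussian rescaling of order $1/n$, the total variation bound from Corollary~\ref{multcor}, the scaling identity reducing $F_n-\widetilde F_n$ to an inverse-temperature increment (the paper does this via Jensen, you via the exact identity $\widetilde F_n=F_n(\beta/(1+\ep))$ — they are the same convexity bound), and then convexity of $F_n$ together with $F_n/n\to p$ to force $\langle H_n\rangle$ to be of macroscopic order. The only cosmetic differences are the sign of the perturbation and that you use a secant-slope argument in place of the paper's passage through a.e.\ differentiability of $p$; both ultimately lean on the same known fact that $p$ is strictly increasing on $(0,\infty)$.
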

\begin{proof}
Since $h=0$, we will write $F_n(\beta)$ instead of $F_n(\beta, h)$. Let $\alpha$ be a positive constant, to be determined later. Let
\[
\tilde{g}_{ij} := \frac{g_{ij}}{1-\alpha n^{-1}},
\]
and let $\tilde{F}_n(\beta)$ be the free energy of the model where $g_{ij}$ is replaced by $\tilde{g}_{ij}$. Then by Corollary \ref{multcor},
\eq{
\tv(\ml_{F_n(\beta)}, \ml_{\tilde{F}_n(\beta)}) \le \tv(\ml_{(g_{ij})_{1\le i<j\le n}}, \ml_{(\tilde{g}_{ij})_{1\le i<j\le n}})\le C\alpha,
}
where $C$ does not depend on $n$. 
A simple computation shows that
\eq{
\tilde{F}_n(\beta) - F_n(\beta) &= \log \bigavg{\exp\biggl(\frac{\beta \alpha H_n(\sigma)}{n(1-\alpha n^{-1})}\biggr)}_\beta,%\label{fnfn}
}
where $\smallavg{\cdot}_\beta$ denotes expectation under the probability measure defined by the S-K model at inverse temperature $\beta$. By Jensen's inequality, this gives
\eeq{
\tilde{F}_n(\beta) - F_n(\beta) &\ge \bigavg{\frac{\beta \alpha H_n(\sigma)}{n(1-\alpha n^{-1})}}_\beta.\label{fnfn}
}
On the other hand, another simple computation gives
\eeq{
F_n'(\beta) &= \smallavg{H_n(\sigma)}_\beta,\label{fnp}
}
where $F_n'$ is the derivative of $F_n$. Now, the following facts are well-known (see, for example, in~\cite{talagrandbook1}):  $n^{-1} F_n(\beta)$ is a convex function of $\beta$, and converges to a deterministic limit $P(\beta)$ as $n\to \infty$. Therefore by the properties of convex functions, $n^{-1}F_n'(\beta)\to P'(\beta)$ for every $\beta>0$ where $P$ is differentiable.  Moreover, it is also known that $P'(\beta)>0$ for every $\beta>0$ where $P$ is differentiable.  Thus, at every $\beta>0$ where $P$ is differentiable, $n^{-1}F_n'(\beta)$ converges to a positive limit.

Now, $P$ is convex and hence almost everywhere differentiable in $(0,\infty)$. In particular, for any $\beta>0$ there exists $\beta'\in (0,\beta)$ where $P$ is differentiable. Thus, $n^{-1}F_n'(\beta')$ converges to a positive limit. But the convexity of $F_n$ implies that $F_n'(\beta')\le F_n'(\beta)$. Thus, there exists a positive constant $a$ (depending on $\beta$) such that 
\[
\lim_{n\to \infty}\pp(n^{-1}F_n'(\beta) \ge a)=1.
\]
Thus, by \eqref{fnfn} and \eqref{fnp}, we see that there is some $b>0$ such that 
\[
\lim_{n\to \infty} \pp(\tilde{F}_n(\beta)  - F_n(\beta) \ge b) = 1.
\]
Choosing $\alpha$ small enough, Lemma~\ref{keylmm} completes the proof. 
\end{proof}
Another important quantity related to the S-K model is its ground state energy, namely,
\eeq{
\max_{\sigma\in \{-1,1\}^n} \frac{1}{\sqrt{n}}\sum_{1\le i<j\le n} g_{ij}\sigma_i \sigma_j.\label{gstate}
}
The best known upper bound on the order of fluctuations of the ground state energy is $o(\sqrt{n})$, proved in a recent manuscript of \citet{chenetal1}. No lower bound is known. It is believed that the correct order of fluctuations is $n^\rho$, where $\rho$ is either $1/6$ or $1/4$ (see, for example, the discussion in~\cite{palassini08}). 
The following theorem proves a lower bound of order $1$ on the fluctuations of the ground state energy. %The proof is exactly along the lines of the proof of Theorem~\ref{skthm}, and is therefore omitted.
\begin{thm}
The ground state energy of the S-K model, as defined in~\eqref{gstate}, has fluctuations of order at least $1$, in the sense of Definition \ref{lowdef}. 
\end{thm}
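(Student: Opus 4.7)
The plan is to imitate the proof of Theorem~\ref{skthm} almost verbatim, replacing the free energy by the ground state energy $G_n := \max_{\sigma\in\{-1,1\}^n} n^{-1/2}\sum_{i<j} g_{ij}\sigma_i\sigma_j$ and exploiting the fact that $G_n$ is positively homogeneous of degree one in the disorder. Fix a small $\alpha>0$ (to be chosen at the end), set $\tilde g_{ij}:=g_{ij}/(1-\alpha n^{-1})$, and let $\tilde G_n$ be the corresponding perturbed ground state energy. Homogeneity gives the exact identity $\tilde G_n = G_n/(1-\alpha n^{-1})$, and therefore
$$\tilde G_n - G_n = \frac{\alpha n^{-1}}{1-\alpha n^{-1}}\, G_n.$$
This replaces the Jensen-based lower bound \eqref{fnfn} that was needed for the free energy with a genuine equality, which is actually what makes the argument cleaner here than in Theorem~\ref{skthm}.

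The coupling side is immediate from Corollary~\ref{multcor} applied with $d=1$ and $\epsilon_{ij}=-\alpha n^{-1}$ to the $\binom{n}{2}$ i.i.d.\ standard Gaussians $g_{ij}$: since $\sum_{i<j}\epsilon_{ij}^2\le\alpha^2/2$, I obtain $\tv(\ml_{G_n},\ml_{\tilde G_n})\le C'\alpha$ with $C'$ independent of $n$. Provided I can show $G_n\ge cn$ with probability tending to one for some fixed $c>0$, the displayed identity gives $\tilde G_n - G_n \ge c\alpha/2$ with probability tending to one, and feeding both inputs into Lemma~\ref{keylmm} with $\alpha$ chosen small enough yields $\pp(G_n\in I)\le 11/12$ for every interval $I$ of length at most $c\alpha/4$ and all large $n$. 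By Definition~\ref{lowdef} this is exactly the desired conclusion.

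The only nontrivial step, and therefore the one I expect to be the main obstacle, is establishing that $G_n\ge cn$ with high probability, since nothing in the present paper directly implies it. I would import this from the classical theory: either directly from the Parisi formula, or more cheaply by taking $\beta\to\infty$ in the convergence $n^{-1}F_n(\beta)\to P(\beta)$ already cited in the proof of Theorem~\ref{skthm} together with $P(\beta)/\beta\to E_*$ for a strictly positive ground state energy density $E_*$, which yields $n^{-1}\ee[G_n]\to E_*$. Since $(g_{ij})\mapsto G_n$ is Lipschitz in the Euclidean norm with constant of order $\sqrt{n}$, standard Gaussian concentration controls $|G_n-\ee[G_n]|=O(\sqrt{n})=o(n)$ with high probability, and combining the two gives $G_n\ge E_* n/2$ for all large $n$ with probability tending to one. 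With this single external input the rest of the argument is a carbon copy of the free-energy proof, requiring no further calculation.
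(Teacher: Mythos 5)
Your proposal is correct and follows exactly the same route as the paper: multiplicative rescaling of the Gaussians via Corollary~\ref{multcor}, exact homogeneity $\tilde G_n = G_n/(1-\alpha n^{-1})$, the fact that $G_n/n$ converges in probability to a positive constant, and Lemma~\ref{keylmm}. The paper simply cites $G_n/n\to E_*>0$ as well-known and omits the derivation you sketch from the free-energy limit and Gaussian concentration.
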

\begin{proof}
Let $\tilde{g}_{ij}$ be as in the proof of Theorem~\ref{skthm}. Let $G_n$ and $\tilde{G}_n$ be the ground state energies in the two systems. Then 
\eq{
\tilde{G}_n &= \frac{G_n}{1-\alpha n^{-1}}. 
}
Using the well-known fact that $G_n/n$ converges in probability to a deterministic positive limit as $n\to \infty$, it is now easy to complete the proof using Corollary \ref{multcor} and Lemma \ref{keylmm}.
\end{proof}

\subsection{First-passage percolation}
Take any $d\ge 2$. Let $E$ be the set of nearest-neighbor edges of $\zz^d$, and let $(\omega_e)_{e\in E}$ be a collection of i.i.d.~nonnegative random variables, called `edge weights'. Define the weight of a path in $\zz^d$ to be the sum of the weights of the edges in the path. Define the first-passage time $T(x,y)$ from a point $x$ to a point $y$ to be the minimum over the weights of all paths from $x$ to $y$. First-passage percolation is the study of the behavior of these first-passage times. For a recent survey of the large mathematical literature on this model, see~\cite{fppsurvey}. 

A lot of energy has been spent on the study of fluctuations of first-passage times, but the known results are far from optimal. If $x$ and $y$ are points at distance $n$ from each other, the best known upper bound on the fluctuations of $T(x,y)$ is of order $\sqrt{n/\log n}$ under mild assumptions on the law of the edge weights~\cite{bks, br08, damronetal}, although it is conjectured that at least in $d=2$, the correct order should be $n^{1/3}$. The situation with lower bounds is even worse. The best known lower bound on the order of fluctuations in $d=2$, due to~\citet{pemantleperes}, is of order $\sqrt{\log n}$ when the edge weight distribution is exponential. The proof depends crucially on the memoryless property of the exponential distribution. \citet{newmanpiza} showed that for a fairly general class of edge weight distributions, the variance of $T(x,y)$ is lower bounded by a constant multiple of $\log n$. The Newman--Piza lower bound is based on a technique pioneered by \citet{wehraizenman90}. However, on its own, this lower bound on the variance does not give a lower bound on the order of fluctuations since there is no matching upper bound on any higher moment (or even on the variance itself). The following theorem proves the $\sqrt{\log n}$ lower bound on the order of fluctuations for a large class of edge weight distributions.
\begin{thm}
Consider the first-passage percolation model with i.i.d.~nonnegative edge weights in $d=2$. Suppose that the edge weight distribution belongs to the class $\cp^+(1)$. Let $x_n$ and $y_n$ be two sequences of points such that the distance between $x_n$ and $y_n$ grows like a constant multiple of $n$. Then the fluctuations of $T(x_n,y_n)$ are at least of order $\sqrt{\log n}$ in the sense of Definition~\ref{lowdef}. 
\end{thm}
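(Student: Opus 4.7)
The plan is to apply Lemma~\ref{keylmm} with $X = T(x_n, y_n)$ and $Y = \tilde T(x_n, y_n)$, where $\tilde T$ is the first-passage time in a multiplicative perturbation $\tilde\omega$ of the edge weights of the type handled by Corollary~\ref{multcor}. The perturbation will be organized in dyadic annular scales around $x_n$, with amplitudes chosen so that the total variation distance stays bounded away from $1$ while a deterministic shift of order $\sqrt{\log n}$ accumulates from the $\log n$ scales. Since each scale must contribute only $O(1/\sqrt{\log n})$ to keep $\sum \ep^2 = O(1)$, this is the best exchange the coupling framework can yield, and it saturates the Newman--Piza variance bound.

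Concretely, fix a small constant $\gamma > 0$ and a large integer $k_0$. For $k_0 \le k \le k_1 := \lfloor \log_2(c_0 n)\rfloor$, let $R_k := 2^k$, where $c_0 > 0$ is chosen so that the $\ell^\infty$-ball of radius $2R_{k_1}$ around $x_n$ excludes $y_n$ for all large $n$. Let $A_k$ be the set of edges $e$ with $R_{k-1} \le \|e - x_n\|_\infty < R_k$, so $|A_k| = \Theta(R_k^2)$. Define $\tilde\omega_e := \omega_e/(1 + \ep_k)$ for $e \in A_k$ with $\ep_k := \gamma/(R_k\sqrt{\log n})$, and $\tilde\omega_e := \omega_e$ otherwise. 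Then
\[
\sum_e \ep_{k(e)}^2 \;=\; \sum_{k=k_0}^{k_1}|A_k|\,\ep_k^2 \;=\; \Theta\!\left(\frac{(k_1 - k_0)\gamma^2}{\log n}\right) \;=\; O(\gamma^2),
\]
so Corollary~\ref{multcor} yields $\tv(\ml_\omega, \ml_{\tilde\omega}) \le C\gamma$, which is $\le 1/2$ once $\gamma$ is small.

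The next step is to lower-bound $T - \tilde T$. Let $P^*$ be the $\omega$-geodesic from $x_n$ to $y_n$. Evaluating $\tilde\omega$ on $P^*$ gives
\[
\tilde T \;\le\; \sum_{e \in P^*}\tilde\omega_e \;=\; T - \sum_{k=k_0}^{k_1}\delta_k W_k(P^*),
\]
where $\delta_k := \ep_k/(1+\ep_k) \asymp \ep_k$ and $W_k(P^*) := \sum_{e \in P^* \cap A_k}\omega_e$. The choice of $k_1$ forces $P^*$ to cross each annulus $A_k$ from inner to outer boundary, so $W_k(P^*) \ge C_k$, where $C_k$ is the minimum weight of a path in $A_k$ joining its two boundary components. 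Crucially, the $C_k$'s are independent, since the $A_k$'s have disjoint edge sets. A standard first-passage percolation estimate supplies a constant $\kappa > 0$ and an exponential tail $\pp(C_k < \kappa R_k) \le e^{-c R_k}$; choosing $k_0$ large so that $\sum_{k \ge k_0}e^{-cR_k} \le 1/3$, the event $\{C_k \ge \kappa R_k\text{ for all }k\}$ has probability $\ge 2/3$, and on this event
\[
T - \tilde T \;\ge\; \kappa \sum_{k=k_0}^{k_1}\delta_k R_k \;\asymp\; \gamma\sqrt{\log n},
\]
since each term satisfies $\delta_k R_k \asymp \gamma/\sqrt{\log n}$ and there are $\Theta(\log n)$ of them. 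Combining with the total variation bound and applying Lemma~\ref{keylmm} then yields the claimed fluctuation lower bound of order $\sqrt{\log n}$.

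The main obstacle is the annulus-crossing estimate $\pp(C_k < \kappa R_k) \le e^{-cR_k}$, which is the only input drawn from outside the framework of Section~\ref{pertsec}. Since $\cp^+(1)$ consists of absolutely continuous densities on $[0,\infty)$ with all moments finite, one has $\pp(\omega_e = 0) = 0 < p_c(\zz^2) = 1/2$, hence the FPP time constant is strictly positive, and Kesten's subadditive-concentration machinery (applied to dyadic annulus crossings) delivers the required bound. The only delicate point is uniformity of $\kappa$ across scales, which follows from the scale invariance of the dyadic crossing problem and subadditive convergence to the positive time constant.
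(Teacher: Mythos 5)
Your proposal is correct and is essentially the paper's own argument: the perturbation $\omega_e\mapsto\omega_e/(1+\ep_e)$ with $\ep_e\asymp 1/(\mathrm{dist}(e,x_n)\sqrt{\log n})$ (organized by you into dyadic annuli, by the paper edge-by-edge) gives $\sum_e\ep_e^2=O(\gamma^2)$ and hence $\tv\le C\gamma$ via Corollary~\ref{multcor}, and the geodesic picks up weight of order $R_k$ at each of the $\Theta(\log n)$ dyadic scales, yielding $T-\tilde T\gtrsim\gamma\sqrt{\log n}$ before Lemma~\ref{keylmm} is applied. The only divergence is the source of the scale-$k$ weight lower bound: you invoke Kesten's annulus-crossing/exponential-tail theorem (valid here since $\pp(\omega_e=0)=0<p_c$), whereas the paper proves the equivalent statement from scratch by a union bound over self-avoiding paths, using that a density in $\cp^+(1)$ is bounded so that $\ee(e^{-\theta\omega_e})\le C/\theta$ --- a self-contained substitute for the same estimate.
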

\begin{proof}
Throughout this proof, $C$ will denote any positive constant that may depend only on the edge weight distribution, and nothing else. The value of $C$ may change from line to line or even within a line.

Let $n$ be a positive integer greater than four, and let $y$ be a point at distance $n$ from $0$. Let $T= T(0,y)$. It suffices to prove that $T$ has fluctuations of order at least $\sqrt{\log n}$. 

For each edge $e$, let $k(e)$ denote the distance of $e$ from the origin, where `distance' means the graph distance between the origin and the endpoint of $e$ that is closer to the origin. For all edges $e$ with $k(e)\le n/2$, let 
\eq{
\ep_e := \frac{\alpha}{(k(e)+1)\sqrt{\log n}},
}
where the constant $\alpha$ will be chosen later. Since there are $\le C r$ edges at distance $r$, 
\eeq{
\sum_e \ep_e^2 &\le \sum_{r\le n/2} Cr\frac{\alpha^2}{(r+1)^2\log n}\le C\alpha^2.\label{ep2}
}
Let $T'$ be the first-passage time from $0$ to $y$ when the original edge weights $\omega_e$ are replaced by $\omega_e/(1+\ep_e)$ for all edges with $k(e)\le n/2$. By Corollary~\ref{multcor} and the inequality~\eqref{ep2},
\eeq{
\tv(\ml_T, \ml_{T'}) \le C\alpha. \label{fpptv}
}
Let $m = [n/2]$. Note that if $e$ and $e'$ are two edges that share a vertex, then $|k(e)-k(e')|\le 1$.  Thus,  considering the first $m$ edges $e_1,\ldots, e_m$ in the optimal path, we see that 
\eeq{
T-T' \ge \sum_{i=1}^m \frac{\ep_{e_i} \omega_{e_i}}{1+\ep_{e_i}} \ge \frac{C\alpha}{\sqrt{\log n}}\sum_{i=1}^m \frac{\omega_{e_i}}{i}.\label{ttq}
}
Notice that the optimal path must necessarily be a self-avoiding path. Let $P(x,r)$ be the set of all self-avoiding paths of length $r$ starting at a vertex~$x$. 
For $\theta \ge 0$, let
\eq{
\phi(\theta) := \ee(e^{-\theta \omega_e}),
}
where $e$ denotes a generic edge. Since the density of the edge weight distribution is uniformly bounded,
\eq{
\phi(\theta) &\le C\int_0^\infty e^{-\theta y} \, d y= \frac{C}{\theta}.
}
Take any $x$ and $r$, and any path in $P(x,r)$. Let $e_1,\ldots, e_r$ be the sequence of edges in the path. Then for any $\theta>0$ and $b>0$, 
\eq{
\pp\biggl(\sum_{i=1}^r \omega_{e_i}\le br\biggr) &\le e^{\theta br} \phi(\theta)^r\le C^r \frac{e^{\theta br}}{\theta^r}.
}
Choosing $\theta = 1/b$, we get
\[
\pp\biggl(\sum_{i=1}^r \omega_{e_i}\le br\biggr) \le (Cb)^r.
\]
Fix some $b$. Let $E_r$ be the event that there is a self-avoiding path, starting at some point at distance $\le r$ from the origin, of length $r$ and weight $\le br$. Since there are $\le Cr^2$ points at distance $\le r$ from the origin, and $\le C^r$ paths of length $r$ from any given starting point, the above inequality shows that 
\eeq{
\pp(E_r) \le C^rr^2 (Cb)^r\le C^r b^r.\label{prineq}
}
Let $E_r^c$ denote the complement of $E_r$, and define
\eq{
F := \bigcap_{k=1}^{[\log_2 m]} E_{2^k}^c.
}
Then by the previous \eqref{prineq},
\eeq{
\pp(F) &\ge 1 - \sum_{k=1}^{[\log_2 m]}C^{2^k} b^{2^k}.\label{prineq2}
}
Suppose that $F$ happens.  Then for any path of length $m$, starting at $0$, with edges $e_1,\ldots, e_m$,
\begin{align}
\sum_{i=1}^m \frac{\omega_{e_i}}{i} &\ge \sum_{k=1}^{[\log_2 m]} \sum_{i=2^{k-1}}^{2^{k}-1} \frac{\omega_{e_i}}{i}\nonumber\\
&\ge \sum_{k=1}^{[\log_2 m]}\frac{1}{2^k} \sum_{i=2^{k-1}}^{2^{k}-1} \omega_{e_i}\nonumber \\
&>  \sum_{k=1}^{[\log_2 m]} \frac{b2^{k-1}}{2^k}= \frac{1}{2}b[\log_2 m].\label{wbineq}
\end{align}
By \eqref{ttq}, \eqref{prineq2} and \eqref{wbineq}, choosing $\alpha$ and $b$ small enough, it follows that there exists a positive constant $a$, depending only on the edge weight distribution, such that
\eq{
\pp\bigl(T-T'\ge a\sqrt{\log n}\bigr) \ge \frac{1}{2}.
}
By \eqref{fpptv} and the above inequality, Lemma~\ref{keylmm} completes the proof.
\end{proof}

For $x\in \zz^d$, let $T_x := T(0,x)$. Extend the definition of $T_x$ to $x\in \rr^d$ in some reasonable way such that the map $x\mapsto T_x$ is continuous. For example, one can define $T_x$ in the interior of every $k$-cell of $\zz^d$ as the unique harmonic function that takes specified values on the boundary of the cell, starting with $k=1$ (because $T_x$ is a priori defined on every $0$-cell), and inductively going up to $k=d$. The reason for insisting on the continuity of the extension is that it will make it technically easier for us to carry out a certain step in the proof of the next theorem.

For $t\ge 0$, define the random set
\[
B(t) := \{x\in \rr^d: T_x\le t\}. 
\]
The shape theorem of \citet{coxdurrett} says that under mild conditions on the edge weight distribution, there exists a deterministic compact symmetric convex set $B_0$ with nonempty interior, such that almost surely, for all $\ep>0$, 
\eq{
(1-\ep)B_0 \subseteq \frac{1}{t}B(t) \subseteq (1+\ep)B_0 \ \ \ \text{for all large $t$.}
}
The set $B_0$ is called the `limit shape' of first-passage percolation with the given edge weight distribution. 

A unit vector $x\in \rr^d$ is called a `direction of curvature' if in a neighborhood of the boundary point of $B_0$ in the direction $x$, $B_0$ is `at least as curved as an Euclidean sphere'. Formally, this can be defined as in \cite{newmanpiza}, as follows. Take any unit vector $x$. Let $z$ be the boundary point of $B_0$ in the direction $x$. We say that $x$ is a direction of curvature  if there is an Euclidean ball $D$, with any center, such that $D\supseteq B_0$ and $z\in \partial D$. Simple geometric considerations imply that $B_0$ has at least one direction of curvature. 

Suppose that $d=2$ and $x$ is a direction of curvature. Then, under mild conditions on the edge weight distribution, \citet{newmanpiza} showed that for all $\ep>0$, $\var(T_{nx}) \ge Cn^{1/4-\ep}$ for all $n$, where $C$ does not depend on $n$ (but may depend on the edge weight distribution and $\ep$). However, as we have observed before, this does not actually prove anything about the true order of fluctuations of $T_{nx}$ since we do not have a matching upper bound. The following theorem fills this gap.
\begin{thm}\label{fppcurv}
Suppose that $d=2$ and the edge weight distribution belongs to the class $\cp^+(1)$. Take any $\ep>0$. If $x$ is a direction of curvature, then the first-passage time $T_{nx}$ has fluctuations of order at least $n^{1/8-\ep}$ in the sense of Definition \ref{lowdef}.
\end{thm}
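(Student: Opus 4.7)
The plan is to combine the perturbative coupling of Section \ref{pertsec} with a known upper bound on the transverse fluctuations of geodesics under the curvature hypothesis. The idea is to rescale the edge weights by a small factor $1/(1+\ep)$ on a narrow strip $\mathcal{E}$ enclosing the segment $[0,nx]$, chosen wide enough that the geodesic stays inside with probability tending to one, but narrow enough that $\ep$ can be taken relatively large while keeping the total-variation cost bounded. The exponent $1/8$ emerges from the arithmetic $n/\sqrt{n \cdot n^{3/4+\eta}} = n^{1/8-\eta/2}$.

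\textbf{Setup and perturbation.} Let $v$ be a unit vector perpendicular to $x$. Fix $\eta \in (0,\ep)$, set $W := n^{3/4+\eta}$, and let $\mathcal{E}$ be the set of edges of $\zz^2$ with both endpoints within Euclidean distance $W$ of the segment $[0,nx]$, so $|\mathcal{E}| = O(n W) = O(n^{7/4+\eta})$. Under the direction-of-curvature hypothesis and the moment conditions satisfied by laws in $\cp^+(1)$, the transverse fluctuation bounds of \citet{newmanpiza} imply that the geodesic $\gamma$ from $0$ to $nx$ lies inside the strip defining $\mathcal{E}$ with probability tending to one. Let $\alpha > 0$ be a small constant to be chosen, put $\ep := \alpha/\sqrt{|\mathcal{E}|}$, and define $\omega_e' := \omega_e/(1+\ep)$ for $e \in \mathcal{E}$ and $\omega_e' := \omega_e$ otherwise. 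Let $T := T(0,nx)$ and let $T'$ denote the first-passage time computed with the weights $\omega_e'$. Since only edges in $\mathcal{E}$ are perturbed, Corollary \ref{multcor} gives
\[
\tv(\ml_T, \ml_{T'}) \le C\Bigl(\sum_{e \in \mathcal{E}} \ep^2\Bigr)^{1/2} = C\alpha,
\]
for a constant $C$ depending only on the edge weight distribution.

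\textbf{Difference and conclusion.} On the event $\{\gamma \subseteq \mathcal{E}\}$, every edge of $\gamma$ is rescaled by $1/(1+\ep)$, so $T' \le \sum_{e \in \gamma} \omega_e/(1+\ep) = T/(1+\ep)$, and therefore $T - T' \ge \ep T/(1+\ep)$. By the Cox--Durrett shape theorem, $T \ge \tfrac12 n \mu(x)$ with probability tending to one, where $\mu(x) > 0$ is the time constant in direction $x$ (positive because the law is absolutely continuous, so $\pp(\omega_e = 0) = 0$). Hence with probability tending to one,
\[
T - T' \ge \frac{\alpha \mu(x)}{4} \cdot \frac{n}{\sqrt{|\mathcal{E}|}} \ge c \, n^{1/8 - \eta/2} \ge c \, n^{1/8 - \ep},
\]
provided $\eta < 2\ep$. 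Choosing $\alpha$ small so that $C\alpha \le 1/2$ and applying Lemma \ref{keylmm} yields the fluctuation lower bound of order $n^{1/8-\ep}$ in the sense of Definition \ref{lowdef}. The main obstacle is importing the transverse fluctuation bound of order $n^{3/4+\eta}$: this is exactly where the direction-of-curvature hypothesis is used, and without it one would be forced to take a strip of width $O(n)$, recovering only the $\sqrt{\log n}$ bound from the previous theorem. The interplay between the narrow strip width $n^{3/4+\eta}$ and the $O(n)$ length of the geodesic is what converts the previous exponent into $1/8$.
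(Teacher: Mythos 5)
Your proposal is correct and follows essentially the same route as the paper: perturb the weights multiplicatively by $1/(1+\alpha n^{-7/8-O(\ep)})$ on a strip of width $n^{3/4+O(\ep)}$ around the segment, use the curvature hypothesis to confine the geodesic to the strip with probability tending to one, bound the total variation cost via Corollary \ref{multcor}, and extract a gap $T-T'\gtrsim n^{1/8-\ep}$ from the linear growth of $T$. The only cosmetic difference is the source cited for the transverse-fluctuation confinement (you invoke Newman--Piza directly; the paper routes it through Alexander's convergence rates and concentration as outlined in \cite{chatterjeefpp}), which does not change the argument.
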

\begin{proof}
Fix some $n$ and $\ep$. Let $V_n$ be the set of all points in $\zz^2$ that are within Euclidean distance $n^{3/4+2\ep}$ of the straight line joining $0$ and $nx$. It follows by a standard argument (as outlined, for example, in \cite[Section 6]{chatterjeefpp}) using the concentration properties of first-passage times, Alexander's rate of convergence theorem~\cite{alexander93, alexander97}, and the curvature of $B_0$ in the direction $x$,  that the geodesic from $0$ to any lattice point near $nx$ lies entirely in $V_n$ with probability tending to one as~$n\to \infty$. 

Replace the edge weights $\omega_e$ in $V_n$ by $\omega_e' := \omega_e/(1+\alpha n^{-7/8-\ep})$, where the constant $\alpha$ will be chosen later. Keep all other edge weights the same as before. Let $T_{nx}'$ be the first-passage time from $0$ to $nx$ in this new environment. Then by Corollary \ref{multcor}, it follows that
\eq{
\tv(\ml_{T_{nx}}, \ml_{T'_{nx}}) \le C\alpha,
}
where $C$ does not depend on $n$. 
On the other hand, if the original geodesic from $0$ to $[nx]$ is wholly contained in $V_n$, then 
\eq{
T'_{nx} \le \frac{T_{nx}}{1+\alpha n^{-7/8-\ep}}. 
}
Since $T_{nx}/n$ converges to a deterministic positive limit (for example, by the shape theorem, which applies to edge weight distribution in $\cp^+(1)$), this shows that there is some positive constant $c$ such that
\eq{
\lim_{n\to \infty}\pp(T_{nx}-T'_{nx}\ge cn^{1/8 - \ep}) =1.
}
Choosing $\alpha$ small enough, Lemma \ref{keylmm} completes the proof.
\end{proof}
The behavior of the set $B(t)$ is of interest in the theory of first-passage percolation. \citet{newmanpiza} defined the following exponent to measure the discrepancy of $t^{-1}B(t)$ from the limit shape $B_0$:
\eeq{
\chi' := \inf\{\kappa: (t-t^\kappa)B_0 \subseteq B(t)\subseteq (t+t^\kappa) B_0 \text{ for all large $t$ a.s.}\}.\label{npeq}
}
As far as I know, it has not been proved that $\chi'>0$ under any conditions. \citet{newmanpiza} showed that in $d=2$, $\max\{\chi', \chi\}\ge 1/5$, where $\chi$ is an exponent defined in terms of lower bounds on variances of first-passage times. To be precise, $\chi = \sup\chi_x$, where the supremum is taken over all unit vectors $x$, and
\[
\chi_x := \sup\{\gamma\ge 0: \text{for some $C>0$, $\var(T_{nx}) \ge Cn^{2\gamma}$ for all $n$}\}. 
\]
Unfortunately, the exponent $\chi$ does not contain any information about the fluctuations of $B(t)$ in the absence of matching upper bounds. The only  result that I know, that gives a true lower bound on the order of fluctuations of $B(t)$, is a theorem of \citet{zhang06}. Zhang showed that in any dimension, $B(t)$ has fluctuations of order at least $\log t$ in a certain sense, if the edge weights are Bernoulli random variables. Very recently, this result has been extended to a general class of edge weight distributions by \citet{nakajima}. Another relevant result, due to \citet{adh}, gives a lower bound on the discrepancy between the expected first-passage times and their limiting values --- but this does not say anything about fluctuations. The following theorem is the first result that shows $\chi'>0$ in two-dimensional first-passage percolation.
\begin{thm}\label{npthm}
Suppose that $d=2$ and the edge weight distribution belongs to the class $\cp^+(1)$. Let $\chi'$ be the Newman--Piza shape fluctuation exponent defined in \eqref{npeq}. Then $\chi'\ge 1/8$. 
\end{thm}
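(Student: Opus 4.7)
The plan is to argue by contradiction using Theorem~\ref{fppcurv}. Suppose $\chi'<1/8$, and fix any $\kappa$ with $\chi'<\kappa<1/8$. By the definition \eqref{npeq}, there is an event of probability one on which, for all sufficiently large $t$,
\[
(t-t^\kappa)B_0\subseteq B(t)\subseteq(t+t^\kappa)B_0.
\]

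Next I would convert this shape containment into a pointwise bound on a first-passage time in a direction of curvature. Pick any unit vector $x$ that is a direction of curvature (one exists, as noted before Theorem~\ref{fppcurv}), and let $\mu:=\|x\|_{B_0}$ denote the $B_0$-gauge of $x$, so that $nx\in sB_0$ iff $s\geq n\mu$. The inner containment forces $T_{nx}\leq t$ whenever $t-t^\kappa\geq n\mu$; taking $t=n\mu+2(n\mu)^\kappa$ and using $\kappa<1$ yields $T_{nx}\leq n\mu+Cn^\kappa$ for all large $n$. Symmetrically, the outer containment forces $T_{nx}>t$ whenever $n\mu>t+t^\kappa$, and the choice $t=n\mu-2(n\mu)^\kappa$ yields $T_{nx}\geq n\mu-Cn^\kappa$. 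Thus, almost surely, for all sufficiently large $n$,
\[
|T_{nx}-n\mu|\leq Cn^\kappa,
\]
where $C$ depends only on $\mu$ and $\kappa$.

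The last step is to contradict Theorem~\ref{fppcurv}. Because the displayed bound holds a.s.\ for all eventually large $n$, for every $\eta>0$ there is $n_0(\eta)$ such that $\pp(|T_{nx}-n\mu|\leq Cn^\kappa)\geq 1-\eta$ for all $n\geq n_0(\eta)$. Choose $\ep>0$ with $\kappa<1/8-\ep$. Since $x$ is a direction of curvature, Theorem~\ref{fppcurv} provides constants $c_1,c_2>0$ such that for all large $n$, every interval of length at most $c_1 n^{1/8-\ep}$ contains $T_{nx}$ with probability at most $1-c_2$. For large $n$ the interval $[n\mu-Cn^\kappa,n\mu+Cn^\kappa]$ has length $2Cn^\kappa\leq c_1 n^{1/8-\ep}$, so taking $\eta<c_2$ produces the contradiction
\[
1-c_2\geq \pp(T_{nx}\in[n\mu-Cn^\kappa,n\mu+Cn^\kappa])\geq 1-\eta.
\]
The only mild subtlety is the asymptotic manipulation $(n\mu\pm O(n^\kappa))^\kappa=(n\mu)^\kappa(1+o(1))$ needed to convert the shape containment into a bound at $t\approx n\mu$; this is routine and is not a real obstacle, so the proof essentially just transplants Theorem~\ref{fppcurv} into a statement about $\chi'$.
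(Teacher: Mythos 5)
Your argument is essentially the same as the paper's: both translate the shape containment $(t-t^\kappa)B_0\subseteq B(t)\subseteq(t+t^\kappa)B_0$ into a statement of the form $\pp(|T_{nx}-cn|\le C n^\kappa)\to 1$ for a direction of curvature $x$, and then invoke Theorem~\ref{fppcurv} to force $\kappa\ge 1/8$. The only differences are cosmetic: you phrase it as a proof by contradiction with $\kappa<1/8$ rather than proving directly that any admissible $\kappa$ must satisfy $\kappa\ge 1/8$, and your choice $t=n\mu-2(n\mu)^\kappa$ (which places $nx$ strictly outside $(t+t^\kappa)B_0$ for large $n$) sidesteps the small boundary-point technicality that the paper handles via continuity of $y\mapsto T_y$.
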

\begin{proof}
Take any $\kappa$ such that almost surely, for all large $t$,
\[
(t-t^\kappa) B_0\subseteq B(t)\subseteq (t+t^\kappa)B_0. 
\]
If $\chi' \ge 1$, there is nothing to prove. So assume that $\chi'<1$. Then we can take $\kappa < 1$. 
Let $E_t$ denote the  event in the above display, and let
\[
F_t := \bigcap_{s\ge t} E_s.
\]
From the above characterization of $\kappa$, we have
\eeq{
\lim_{t\to \infty} \pp(F_t)=1. \label{flim}
}
Take any $t>0$. Let $x$ be a direction of curvature, and let $z$ be the unique point on $\partial B_0$ in the direction $x$. Let $u$ and $v$ solve $u+u^\kappa = t$ and $v-v^\kappa = t$. Suppose that $E_v$ happens. Then 
\[
tB_0 =(v-v^\kappa) B_0 \subseteq B(v),
\]
which implies that $T_{tz}\le v$. Again, if $E_u$ happens, then 
\[
tB_0= (u+u^\kappa)B_0 \supseteq B(u),
\]
which implies that $T_{tz} \ge u$, since if $T_{tz}<u$, then we can find $t'>t$ such that $T_{t'z} < u$ due to the continuity of the map $y\mapsto T_y$ (but $t'z\not\in tB_0$, which gives a contradiction). If $F_u$ happens, then both  $E_u$ and $E_v$ happen, and so the above argument shows that $u\le T_{tz}\le v$.

Since $\kappa < 1$, a simple calculation shows that $u = t - t^\kappa + o(t^\kappa)$ and $v = t+t^\kappa + o(t^\kappa)$ as $t\to \infty$. Thus, by \eqref{flim} and the conclusion of the previous paragraph, we get
\eeq{
\lim_{t\to \infty} \pp(|T_{tz}-t|\le 2t^\kappa) =1.\label{lim1}
}
Recall that $z$ is a scalar multiple of $x$. Therefore, \eqref{lim1} implies that there is some constant $c$ such that
\[
\lim_{n\to \infty} \pp(|T_{nx} - cn|\le 2(cn)^\kappa) = 1.
\]
By Theorem \ref{fppcurv}, this is impossible unless $\kappa \ge 1/8$.
\end{proof}

\subsection{The random assignment problem}
Suppose that we have to assign $n$ tasks to $n$ workers, and $a_{ij}$ is the cost of assigning task $j$ to worker $i$. Suppose that the $a_{ij}$'s are  i.i.d.~nonnegative random variables. Let $S_n$ be the group of all permutations of $\{1,\ldots,n\}$ and let
\[
C_n := \min_{\pi\in S_n} \sum_{i=1}^n a_{i\pi(i)}.
\]
The problem of computing $C_n$ is known as the random assignment problem. \citet{aldous92} proved that if the law of the costs has a density $f$ that is nonzero and finite in a neighborhood of zero, then $C_n$ converges to a deterministic limit as $n\to \infty$. Moreover, the limit depends only on the value of $f(0)$ (assuming that $f$ is continuous at $0$). In \cite{aldous01}, Aldous proved that if $f(0)=1$, then the limit is $\zeta(2)=\pi^2/6$, confirming a conjecture of~\citet{mp85, mp87}. Later, an exact formula for $\ee(C_n)$, when the costs are exponentially distributed, was obtained by~\citet{linussonwastlund} and \citet{nairetal}. When the density of the cost distribution at zero is either blowing up to infinity or converging to zero, the situation becomes more complicated. These cases have been investigated by~\citet{wastlund12}. 

The fluctuations of $C_n$, however, are not as well understood. \citet{talagrand95} proved using his general machinery that if the costs are uniformly distributed in $[0,1]$, then the fluctuations of $C_n$ are at most of order
\[
\frac{(\log n)^2}{\sqrt{n} \log \log n}.
\]
A number of exact calculations for fluctuations are possible when the costs are exponentially distributed with mean one. Under this assumption, \citet{almsorkin} proved that the variance of $C_n$ is at least of order $1/n$, and then \citet{wastlund05, wastlund10} derived the following asymptotic formula for the variance:
\[
\var(C_n) = \frac{4\zeta(2)-4\zeta(3)}{n} + O\biggl(\frac{1}{n^2}\biggr).
\]
\citet{wastlund05, wastlund10} also gave formulas for higher moments when the costs are exponentially distributed, but the asymptotics of these formulas are hard to understand. There is, however, a more direct way to extract lower bounds on fluctuations when the costs are exponentially distributed, using the memoryless property of the exponential distribution. This has been worked out in a related model by~\citet{hesslerwastlund}, also appearing in the Ph.D.~thesis of \citet{hessler}.

When the costs are not exponential, nothing is known about lower bounds on the fluctuations of $C_n$. The following theorem gives a general lower bound of order $n^{-1/2}$, which, in the face of the evidence presented above, appears to be the correct order (recall that an upper bound of order $n^{-1/2}$ is not yet known for non-exponential costs). The proof uses Lemma \ref{keylmm}, but the simple multiplicative perturbation of Section \ref{pertsec} does not give the correct answer for this problem. Instead, a more complicated coupling is used.  
\begin{thm}
Suppose that the cost distribution belongs to the class $\cp^+(1)$. Then the optimal cost $C_n$ in the random assignment problem has fluctuations of order at least $n^{-1/2}$, in the sense of Definition~\ref{lowdef}.
\end{thm}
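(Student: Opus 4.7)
The simple multiplicative coupling $a_{ij}' = a_{ij}/(1+\epsilon)$ from Corollary \ref{multcor} forces $\epsilon = O(1/n)$ to keep $\tv(\ml_a, \ml_{a'}) \le 1/2$ across all $n^2$ entries, which yields only a deterministic gap $C_n - C_n' = C_n\epsilon/(1+\epsilon)$ of order $1/n$, since $C_n = O(1)$. The plan is to shrink only those entries of magnitude $\sim 1/n$, since those are the ones that sit in the optimal assignment. Concretely, fix a nonincreasing $C^\infty$ bump $\phi:[0,\infty) \to [0,1]$ with $\phi = 1$ on $[0,1]$ and $\phi = 0$ on $[2,\infty)$, let $T = \beta/n$ and $\epsilon = \alpha/\sqrt{n}$ for positive constants $\alpha,\beta$ to be tuned, and couple
\[
a_{ij}' := a_{ij}\bigl(1 - \epsilon\,\phi(a_{ij}/T)\bigr).
\]

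For the total-variation side, imitate the Hellinger computation of Theorem \ref{multthm}. Setting $g(\epsilon) = \rho(\ml_{a_{ij}}, \ml_{a_{ij}'})$, one has $g(0)=1$ and $g'(0)=0$ (since $\int f_\epsilon\,dy \equiv 1$), while $|g''(0)|$ is governed by the Fisher-information-type integral $\int_0^\infty (\partial_y[y\phi(y/T)f(y)])^2/f(y)\,dy$, whose integrand is supported on $[0,2T]$. On that short interval $y\phi(y/T) = O(T)$, and by smoothness of $f = e^{-V} \in \cp^+(1)$ with $f(0) > 0$ both $f$ and $f'$ are bounded and $f$ stays bounded away from zero, so the integrand is $O(1)$ there and the whole integral is $O(T)$. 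Hence $\rho \ge 1 - C\epsilon^2 T$ per entry, and Lemma \ref{tvprod} applied to the $n^2$ independent entries yields
\[
\tv(\ml_a, \ml_{a'}) \le C'\sqrt{n^2\epsilon^2 T} = C'\alpha\sqrt{\beta},
\]
which is $\le 1/2$ for $\alpha$ small depending on $\beta$.

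For the gap, let $\pi^*$ be the (a.s.\ unique) optimal permutation for $a$. Plugging $\pi^*$ into the perturbed cost gives $C_n - C_n' \ge \epsilon S_\beta$, where $S_\beta := \sum_i a_{i\pi^*(i)}\phi(a_{i\pi^*(i)}/T) \ge \sum_{i:\,a_{i\pi^*(i)} \le T} a_{i\pi^*(i)}$. The main technical obstacle is the structural claim that for $\beta$ large enough (independent of $n$), $\pp(S_\beta \ge c_1) \ge 3/4$ uniformly in $n$, for some $c_1 > 0$. Since $\ee C_n \to K > 0$ (Aldous, W\"astlund) and $C_n$ concentrates around its mean (Talagrand), this reduces to showing $\ee\sum_i a_{i\pi^*(i)}\, 1_{a_{i\pi^*(i)} > \beta/n} \to 0$ as $\beta \to \infty$ uniformly in $n$ --- equivalently, that the random measures $n^{-1}\sum_i \delta_{n a_{i\pi^*(i)}}$ are uniformly integrable. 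For exponential costs this is available from W\"astlund's exact moment formulas and Aldous's PWIT limit; for a general density in $\cp^+(1)$ one must either transfer the small-value behaviour to the exponential model (using $f(0) > 0$ and $C^\infty$ smoothness near zero) or prove a direct tail estimate, and this tail control is the step I expect to be the main difficulty. Granting it, Lemma \ref{keylmm} applied to $(C_n, C_n')$ with intervals of length $\le \alpha c_1/(2\sqrt{n})$ gives $\pp(C_n \in I) \le \tfrac{1}{2}(1 + 1/4 + 1/2) = 7/8 < 1$, establishing fluctuations of order at least $n^{-1/2}$ in the sense of Definition \ref{lowdef}.
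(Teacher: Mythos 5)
Your coupling and total-variation estimates are sound: the per-entry Hellinger deficit $O(\ep^2 T)$ for a perturbation supported on $[0,2T]$ is correct (using that $e^{-V}\in\cp^+(1)$ is bounded and bounded away from zero near the origin), and the resulting bound $\tv\le C\alpha\sqrt{\beta}$ over the $n^2$ independent entries is the right computation. But the proof is incomplete at exactly the point you flag: because your bump $\phi$ vanishes on $[2T,\infty)$, only the entries of size $O(1/n)$ move, so the gap $C_n-C_n'\ge \ep S_\beta$ is worthless unless you can show that entries of size $\le \beta/n$ carry a constant fraction of the optimal cost, uniformly in $n$ --- i.e.\ the uniform integrability of $n^{-1}\sum_i\delta_{na_{i\pi^*(i)}}$. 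This is a genuine structural statement about the optimizer; it is not available off the shelf for general densities in $\cp^+(1)$ (and even for exponential costs it requires Aldous's objective-method machinery to extract), so as written the argument does not close.

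The paper's proof avoids needing any information about the optimal permutation by choosing $\phi$ \emph{increasing on all of} $[0,\infty)$: piecewise linear with slope $\sqrt{n}$ on $[0,1/n]$ and slope $1$ thereafter, with the perturbation $a'_{ij}+\alpha n^{-1}\phi(a'_{ij})=a_{ij}$. The Hellinger deficit per entry is still $O(\alpha^2 n^{-2})$ (the term $\int\phi'(x)^2e^{-V}\,dx$ is $O(1)$ because the slope $\sqrt{n}$ only acts on an interval of length $1/n$), so the total variation cost is the same $O(\alpha)$ as yours. The payoff is that \emph{every} entry decreases, and any entry $\ge 1/n$ decreases by at least $\alpha n^{-1}\phi(1/(n+\alpha\sqrt{n}))\asymp\alpha n^{-3/2}$. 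One then only needs the elementary fact that $\pp(\min_j a_{ij}\ge 1/n)\ge K>0$ independently across rows, so that with high probability at least $Kn/2$ rows have all entries $\ge 1/n$; whichever entry the optimal assignment selects in such a row drops by $\gtrsim\alpha n^{-3/2}$, giving $C_n-C_n'\gtrsim\alpha n^{-1/2}$ with no knowledge of $\pi^*$ whatsoever. If you want to salvage your bump-function route, you would need to supply the tail estimate you identify as the main difficulty; replacing the bump by a globally increasing $\phi$ as above removes the need for it entirely.
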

\begin{proof}
Throughout this proof, $C$ will denote any positive constant that may depend only on the cost distribution and nothing else. The value of $C$ may change from line to line or even within a line.

 Define a function $\phi:[0,\infty)\to [0,\infty)$ as
\eq{
\phi(x) &= 
\begin{cases}
\sqrt{n} x &\text{ if } 0\le x\le 1/n,\\
x + 1/\sqrt{n}-1/n &\text{ if } x> 1/n.
\end{cases}
}
Then $\phi$ is absolutely continuous and strictly increasing, and $\phi(0)=0$. Moreover, $\phi(x)\le x+1$ for all $x$, and  
\eeq{
\phi'(x)&=
\begin{cases}
\sqrt{n}  &\text{ if } 0< x< 1/n,\\
1 &\text{ if } x> 1/n.
\end{cases}
\label{phiprime}
}
Let $e^{-V}$ be the density function of the cost distribution. Let $X$ be a nonnegative random variable with probability density $e^{-V}$. Let $Y$ solve 
\[
Y+\alpha n^{-1} \phi(Y) = X,
\]
where $\alpha$ is a positive constant that will be chosen later. 
Since the map $x\mapsto x + \alpha n^{-1} \phi(x)$ is strictly increasing and continuous on $[0,\infty)$ and sends $0$ to $0$, it is a bijection of $[0,\infty)$ onto itself. Therefore, $Y$ is uniquely defined. The probability density function of $Y$ is 
\[
(1+\alpha n^{-1}\phi'(x)) e^{-V(x+\alpha n^{-1}\phi(x))}. 
\]
For $\ep \in (-1/2, 1/2)$, define
\eq{
g(\ep) := \int_0^\infty \sqrt{(1+\ep\phi'(x))e^{-V(x+\ep \phi(x))}e^{-V(x)}} \,d x,
}
so that
\eq{
\rho(\ml_X, \ml_Y) = g(\alpha n^{-1}). 
}
As in the proof of Theorem \ref{multthm}, it is easy to verify using the assumptions on $V$ and $\phi$ that $g$ is a $C^\infty$ function of $\ep$ and the derivatives with respect to $\ep$ can be taken inside the integral. Note that $g(0) = 1$, and 
\eq{
g'(\ep) &= \int_0^\infty \frac{\phi'(x)}{2\sqrt{1+\ep\phi'(x)}}e^{-(V(x+\ep \phi(x))+V(x))/2} \,d x\\
&\qquad - \frac{1}{2}\int_0^\infty \sqrt{1+\ep\phi'(x)}V'(x+\ep\phi(x))\phi(x)e^{-(V(x+\ep \phi(x))+V(x))/2} \,d x.
}
Thus,
\eq{
g'(0) &= \frac{1}{2}\int_0^\infty(\phi'(x)-V'(x)\phi(x)) e^{-V(x)} \, d x,
}
which equals zero by integration by parts, since $\phi(0)=0$. Next, note that
\eq{
&g''(\ep) = -\int_0^\infty \frac{\phi'(x)^2}{4(1+\ep\phi'(x))^{3/2}}e^{-(V(x+\ep \phi(x))+V(x))/2} \,d x\\
&\quad -\frac{1}{2}\int_0^\infty \frac{\phi'(x)}{\sqrt{1+\ep\phi'(x)}}V'(x+\ep\phi(x))\phi(x)e^{-(V(x+\ep \phi(x))+V(x))/2} \,d x\\
&\quad - \frac{1}{2}\int_0^\infty \sqrt{1+\ep\phi'(x)}V''(x+\ep\phi(x))\phi(x)^2e^{-(V(x+\ep \phi(x))+V(x))/2} \,d x\\
&\quad + \frac{1}{4}\int_0^\infty \sqrt{1+\ep\phi'(x)}V'(x+\ep\phi(x))^2\phi(x)^2e^{-(V(x+\ep \phi(x))+V(x))/2} \,d x.
}
Because of \eqref{phiprime}, it is convenient to write each of the four terms in the above display as a sum of two integrals --- one from $0$ to $1/n$ and another from $1/n$ to $\infty$. Due to \eqref{phiprime} and the boundedness of $V$ near zero, the magnitude of the first part is bounded by $C$ in all four cases. In the second part, $\phi'(x)$ is bounded by $1$ and $\phi(x)$ is bounded by $x+1$. Therefore using the properties of $V$ and a few applications of the Cauchy--Schwarz inequality and change-of-variables, we see that the magnitude of the second part is also bounded by $C$ in all four cases. Thus, 
\[
\sup_{-1/2<\ep<1/2} |g''(\ep)|\le C.
\]
As a consequence, 
\eeq{
\rho(\ml_X, \ml_Y) &= g(\alpha n^{-1}) \ge 1 - C\alpha^2 n^{-2}. \label{bcbd}
}
Now for each $i,j$, let $a'_{ij}$ solve 
\eq{
a'_{ij} +\alpha n^{-1}\phi(a'_{ij}) = a_{ij}. 
}
Let $C_n'$ be the optimal assignment cost with these new costs. By \eqref{bcbd} and Lemma \ref{tvprod}, it follows that
\eeq{
\tv(\ml_{C_n},\ml_{C_n'}) &\le C\alpha.\label{tvcn}
}
For $1\le i\le n$, let
\eq{
b_i := \min_{1\le j\le n} a_{ij}. 
}
Since $e^{-V}$ is a bounded density, 
\eeq{
\pp(b_i \ge 1/n) &\ge (1-C/n)^n\ge K, \label{biq}
}
where $K$ is some positive constant that does not depend on $n$.
Let 
\[
A := \{i: b_i \ge 1/n\}.
\]
Since $b_1,\ldots, b_n$ are i.i.d.~random variables, \eqref{biq} shows that 
\eeq{
\lim_{n\to \infty}\pp(|A|\ge Kn/2) =1.\label{akn2}
} 
Now notice that $a_{ij}'\le a_{ij}$ for all $i$ and $j$. Moreover, if $i\in A$, then for any $j$, $a_{ij}\ge 1/n$. Since $x\mapsto x + n^{-1}\phi(x)$ is an increasing map, this implies that $a_{ij}' \ge x_n$, where $x_n$ is the unique solution of
\[
x_n + \alpha n^{-1}\phi(x_n)= n^{-1}.
\]
A simple calculation shows that
\[
x_n = \frac{1}{n+\alpha\sqrt{n}}. 
\]
Thus, if $i\in A$, then for any $j$,
\eq{
a_{ij}' &\ge \frac{1}{n+\alpha\sqrt{n}},
}
and therefore
\eq{
a_{ij}-a_{ij}' &= \alpha n^{-1}\phi(a_{ij}')\\
&\ge \alpha n^{-1} \phi\biggl(\frac{1}{n+\alpha\sqrt{n}}\biggr) = \frac{\alpha }{n^{3/2} + \alpha n}. 
}
Combining all observations, we get
\eq{
C_n - C_n' &\ge \frac{\alpha |A|}{n^{3/2} + \alpha n}. 
}
By \eqref{tvcn}, \eqref{akn2} and Lemma~\ref{keylmm}, this completes the proof. 
\end{proof}

\subsection{Determinants of random matrices}
Let $n$ and $N$ be two positive integers, and let $f$ be a measurable function from $\rr^n$ into $\rr^{N\times N}$, the set of all $N\times N$ real matrices. Suppose that there is some $r> 0$ such that for all $x_1,\ldots, x_n\in\rr$ and all $\lambda\ge0$, 
\eeq{
f(\lambda x_1,\ldots, \lambda x_n) = \lambda^r f(x_1,\ldots, x_n). \label{homog2}
}
Let $X_1,\ldots, X_n$ be i.i.d.~random variables. Then $M= f(X_1,\ldots, X_n)$ is an $N\times N$ random matrix. Many common families of random matrices, such as Wigner matrices, sample covariance matrices, random Toeplitz and Hankel matrices, and random band matrices, can be obtained in the above manner as functions of independent random variables where the function satisfies~\eqref{homog2} for some $r$. For example, for a Wigner matrix of order $N$, $r=1$ and $n = N(N+1)/2$. The following theorem gives a lower bound on the order of fluctuations of $\log |\det M|$. 
\begin{thm}\label{matrixthm}
Let $X_1,X_2,\ldots$ be a sequence of i.i.d.~random variables with probability density in either $\cp(1)$ or $\cp^+(1)$. For each $n$, let $N_n$ be a  positive integer and let $f_n$ be a measurable function from $\rr^n$ into $\rr^{N_n\times N_n}$ satisfying~\eqref{homog2} for some fixed $r>0$. Let $M_n := f_n(X_1,\ldots, X_n)$. Then $\log|\det M_n|$ has fluctuations of order at least $n^{-1/2} N_n$, in the sense of Definition \ref{lowdef}.
\end{thm}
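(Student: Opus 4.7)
The plan is to mimic the proof of Theorem \ref{optthm}: fix a constant $\alpha>0$ to be chosen later, set $X_i' := X_i/(1+\alpha n^{-1/2})$, and let $M_n' := f_n(X_1',\ldots,X_n')$. Applying Corollary \ref{multcor} with $\ep_i = \alpha n^{-1/2}$ for every $i$ (so $\sum_i \ep_i^2 = \alpha^2$), together with the fact that total variation distance is non-increasing under measurable maps, yields
\[
\tv(\ml_{\log|\det M_n|},\,\ml_{\log|\det M_n'|}) \le \tv(\ml_{(X_1,\ldots,X_n)},\,\ml_{(X_1',\ldots,X_n')}) \le C\alpha,
\]
where $C$ depends only on the common law of the $X_i$'s.

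The key observation is that the homogeneity property \eqref{homog2} converts this multiplicative perturbation of the inputs into a \emph{deterministic} shift of $\log|\det|$. Indeed, $M_n' = (1+\alpha n^{-1/2})^{-r} M_n$, so $\det M_n' = (1+\alpha n^{-1/2})^{-rN_n}\det M_n$, and on the event $\{\det M_n \ne 0\}$ we have
\[
\log|\det M_n| - \log|\det M_n'| = rN_n\log(1+\alpha n^{-1/2}) \ge \tfrac{r\alpha}{2}N_n n^{-1/2} =: \delta_n,
\]
for all $n$ large enough (using $\log(1+x) \ge x/2$ for $x \in (0,1]$).

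With this deterministic separation in hand, the proof of Lemma \ref{keylmm} closes the argument essentially for free. For any finite interval $I = [a,b]$ of length less than $\delta_n$, the event $\{\log|\det M_n|\in I\}\cap\{\log|\det M_n'|\in I\}$ is \emph{empty}: if $\det M_n = 0$ then $\log|\det M_n|=-\infty \notin I$, while if $\det M_n \ne 0$ the two log-determinants differ by at least $\delta_n > b - a$. The identity $\pp(\{X\in I\}\cup\{Y\in I\}) = \pp(X\in I) + \pp(Y\in I) - \pp(\{X\in I\}\cap\{Y\in I\})$ together with $\pp(Y\in I) \ge \pp(X\in I) - \tv(\ml_X,\ml_Y)$ and the bound above therefore gives
\[
\pp(\log|\det M_n|\in I) \le \frac{1+C\alpha}{2}.
\]
Choosing $\alpha$ small enough that $(1+C\alpha)/2 < 1$ establishes fluctuations of order at least $N_n n^{-1/2}$ in the sense of Definition \ref{lowdef}.

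The only potential obstacle here is the cosmetic one of handling possible singularity of $M_n$: if $\pp(\det M_n = 0) > 0$ then $\log|\det M_n|$ formally equals $-\infty$ and the difference of log-determinants is undefined. But as the argument shows, this case only helps the bound, since $-\infty$ cannot lie in any finite interval $I$. Everything else is a direct application of the perturbative coupling machinery developed in Section \ref{pertsec}.
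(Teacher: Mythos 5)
Your proof is correct and follows the same route as the paper: perturb each $X_i$ multiplicatively by $(1+\alpha n^{-1/2})^{-1}$, invoke Corollary \ref{multcor} for the total variation bound, use homogeneity to convert this into a deterministic shift of $\log|\det M_n|$ of size $rN_n\log(1+\alpha n^{-1/2})\asymp N_n n^{-1/2}$, and close with Lemma \ref{keylmm}. (You've also corrected a sign slip in the paper's displayed formula --- the exponent should be $-rN_n$, not $rN_n$, though this is immaterial --- and you explicitly handle the degenerate event $\{\det M_n=0\}$, which the paper passes over silently.)
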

\begin{proof}
Throughout this proof, $C$ will denote any constant that may depend only on the distribution of the $X_i$'s, but not on $n$.

Fix $n$. Let $X_i' := X_i/(1+\alpha n^{-1/2})$, where $\alpha$ will be determined later. Let $M_n' := f_n(X_1',\ldots,X_n')$. Let $L_n := \log |\det M_n|$ and $L_n':= \log |\det M_n'|$. By Corollary \ref{multcor},
\eq{
\tv(\ml_{L_n}, \ml_{L_n'}) \le \tv(\ml_{(X_1,\ldots, X_n)}, \ml_{(X_1',\ldots, X_n')})\le C\alpha. 
}
On the other hand, by \eqref{homog2}, 
\eq{
\det M_n' = (1+\alpha n^{-1/2})^{rN_n} \det M_n,
}
and hence
\eq{
L_n' = L_n +r N_n \log(1+\alpha n^{-1/2}). 
}
An application of Lemma \ref{keylmm} completes the proof.
\end{proof}
Consider now the special case of sample covariance matrices. Let $n$ and $p$ be two positive integers, and let $Y_1,\ldots, Y_n$ be i.i.d.~$p$-dimensional random vectors, whose components are i.i.d.~random variables. The sample covariance matrix for this data is defined as
\eeq{
W := \frac{1}{n}\sum_{i=1}^p (Y_i - \bar{Y})(Y_i- \bar{Y})^T,\label{wdef}
}
where $x^T$ denotes the transpose of a column vector $x$, and 
\[
\bar{Y} := \frac{1}{n}\sum_{i=1}^n Y_i.
\]
Note that this is slightly different than the Wishart matrices usually considered in random matrix theory, which have the same definition, except that $\bar{Y}$ is not subtracted from $Y_i$ in \eqref{wdef}. 

A sample covariance matrix is positive semi-definite and so its determinant is always nonnegative. The logarithm of the determinant of a sample covariance matrix is an important object in statistics, where it is used to perform tests of hypotheses~\cite{anderson03}. A central limit theorem for $\log \det W$, when $p$ is fixed, $n\to \infty$ and the $Y_i$'s are complex Gaussian random vectors, was established in 1963 by \citet{goodman63}. The high dimensional case, where $p$ and $n$ both tend to infinity, was solved by \citet{cailiangzhou} in 2015. (The corresponding result for Wishart matrices, however, is standard fare in random matrix theory --- see, for example, \cite[Chapter 7]{pasturshcherbina}.) The main result of~\cite{cailiangzhou} is a central limit theorem for $\log \det W$ when the $Y_i$'s are real Gaussian random vectors (possibly with correlated coordinates), $n\to \infty$, and $p$ is allowed to vary arbitrarily but under the constraint that $p\le n$ (so that $\det W \ne 0$). In this scenario, \citet{cailiangzhou} show that $\log \det W$ has Gaussian fluctuations of order $\sqrt{p/n}$ if $p/n\to r\in [0,1)$, and of order $\sqrt{\log n}$ if $p/n\to 1$. 

The non-Gaussian case is open. In particular, a central limit theorem for $\log \det W$ has not been proved in the setting described above, that is, $Y_i$'s having i.i.d.~but not necessarily Gaussian coordinates. The following corollary of Theorem \ref{matrixthm} provides a lower bound on the fluctuation of $\log \det W$ which appears to be of the correct order if $p/n \to r\in [0,1)$, in view of the result of \citet{cailiangzhou}. 
\begin{cor}\label{matrixcor}
Let $Y_1,\ldots, Y_n$ be i.i.d.~$p$-dimensional random vectors and let $W$ be defined as in \eqref{wdef}. Suppose that the coordinates of $Y_i$'s are i.i.d.~with probability density in $\cp(1)$ (which remains fixed as $n$ and $p$ vary). Then as $n\to \infty$ and $p$ varies arbitrarily as a function of $n$ (with the constraint that $p\le n$), $\log \det W$ has fluctuations of order at least $\sqrt{p/n}$, in the sense of Definition \ref{lowdef}.
\end{cor}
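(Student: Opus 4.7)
The plan is to deduce this corollary as a direct application of Theorem \ref{matrixthm}, by viewing the sample covariance matrix $W$ as a matrix-valued function of the $np$ scalar entries of $Y_1,\ldots,Y_n$, so that the required homogeneity comes for free from the bilinear structure of $W$.

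First I would linearly order the $np$ coordinates of $(Y_1,\ldots,Y_n)$ as a single i.i.d.~sequence $X_1,\ldots,X_{np}$ and write $W = g(X_1,\ldots,X_{np})$ for the corresponding measurable function $g:\rr^{np}\to \rr^{p\times p}$. Scaling every $X_k$ by a common $\lambda\ge 0$ scales each $Y_i$ by $\lambda$, hence scales $\bar Y$ by $\lambda$ and every centered vector $Y_i-\bar Y$ by $\lambda$; consequently each rank-one summand $(Y_i-\bar Y)(Y_i-\bar Y)^T$ scales by $\lambda^2$, and so $W$ itself scales by $\lambda^2$. Therefore $g$ satisfies the homogeneity identity \eqref{homog2} with exponent $r=2$ and matrix size $N=p$.

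Next I would invoke Theorem \ref{matrixthm} with the theorem's parameter $n$ replaced by $m:=np$, with $N_m:=p$ and $f_m:=g$. The common coordinate density belongs to $\cp(1)$ by hypothesis, so all assumptions of the theorem are satisfied. Its conclusion gives that $\log|\det W|$ has fluctuations of order at least
\[
m^{-1/2}\,N_m \;=\; (np)^{-1/2}\cdot p \;=\; \sqrt{p/n},
\]
which is exactly the asserted bound.

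The only subtlety worth flagging is that $W$ is positive semi-definite, so $\log|\det W|=\log\det W$, and $W$ is singular (with $\log\det W=-\infty$) when $p\ge n$; under the corollary's constraint $p\le n$ one really needs $p\le n-1$ for $\log\det W$ to be a genuine real-valued random variable, and this is the only range in which the statement has content. No new technical obstacle arises: the heavy lifting---the perturbative coupling, the Hellinger/total variation bound of Corollary \ref{multcor}, and the application of Lemma \ref{keylmm}---is already packaged inside Theorem \ref{matrixthm}, and the only per-problem step is the elementary homogeneity computation above.
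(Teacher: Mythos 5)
Your proof is correct and takes essentially the same route as the paper's: view $W$ as a function of the $np$ i.i.d.\ scalar entries, check homogeneity of degree $r=2$, and apply Theorem~\ref{matrixthm} with $n\mapsto np$ and $N_n\mapsto p$ to get the rate $(np)^{-1/2}p=\sqrt{p/n}$. The side remark about needing $p\le n-1$ for $\log\det W$ to be finite is a reasonable observation but does not change the argument.
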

\begin{proof}
Note that the $p\times p$ matrix $W$ is a function of $np$ i.i.d.~random variables, and this function satisfies \eqref{homog2} with $r=2$. Thus, the $n$ in Theorem~\ref{matrixthm} should be replaced by $np$ and $N_n$ should be replaced by $p$. With these replacements, the lower bound on the order of fluctuations turns out to be $(np)^{-1/2}p = \sqrt{p/n}$, proving the claim. 
\end{proof}

It is surprising to me that a soft technique based on Lemma~\ref{keylmm} and Corollary~\ref{multcor} can actually yield the correct lower bound in Corollary~\ref{matrixcor}. Indeed, the method does not yield the correct bound for Wigner matrices. Improving on an earlier work of \citet{taovu12}, \citet{nguyenvu} proved the central limit theorem for log-determinants of Wigner matrices with non-Gaussian entries in 2014. According to this result, the log-determinant has fluctuations are of order $\sqrt{\log n}$. A straightforward application of Theorem~\ref{matrixthm}, however, only gives a lower bound of order $1$. Possibly a better coupling than the one provided by Corollary \ref{multcor} is needed to achieve the $\sqrt{\log n}$ lower bound.

\section{Open problems}
There are many open questions about lower bounds for fluctuations of random variables. Here is a list of questions that are closely associated with the examples worked out in this paper.
\begin{enumerate}
%\item Prove a tight lower bound for the fluctuations of the  length of the optimal tour in the traveling salesman problem when the points are uniformly distributed in $[0,1]^d$. This problem is probably solvable by a direct extension of the argument given by \citet{rhee91} for the two-dimensional case, combined with the method of this paper. 
\item Extend the results of this paper beyond the distribution classes $\cp(d)$ and $\cp^+(d)$. In particular, proofs under minimal assumptions would be very desirable. 
\item In the traveling salesman problem for uniformly distributed points on $[0,1]^2$, prove that the variance of the length of the optimal tour converges to a constant as the number of points tends to infinity, and identify this constant if possible. This conjecture is due to Mike Steele, who told me about it in a personal communication. 
\item Prove a tight lower bound for the fluctuations of the length of the minimal matching when the points are uniformly distributed in $[0,1]^d$. 
\item Prove a tight lower bound for fluctuations in the longest common subsequence problem for random words. Considerable progress on this problem has been made in~\cite{gongetal, houdrema, houdrematzinger, lembermatzinger}, but the most important case of uniformly distributed letters is open. A solution of this problem would complete the proof of the central limit theorem for longest common subsequences, as shown by~\citet{houdreislak}.
\item Improve the lower bound for the fluctuations of the first-passage time in two-dimensional first-passage percolation.
\item Extend the lower bound result for first-passage percolation to the case of discrete edge weights. 
\item Prove any nontrivial lower bound for the fluctuations of the first-passage time in higher dimensions (a lower bound of order $1$ is easy using the method of this paper).
\item Improve the lower bound for the Newman--Piza exponent $\chi'$ for two-dimensional first-passage percolation.
\item Show that $\chi'>0$ in higher dimensions.
\item Improve the lower bound on the order of fluctuations of the free energy  of the S-K model or show that it is optimal. Same for the ground state energy.
\item Prove the optimality of the lower bound in the random assignment problem for cost distributions in $\cp^+(1)$ or any other general class of cost distributions. 
\item Prove tight lower bounds for fluctuations of functionals of random matrices other than the determinant, such as linear statistics of eigenvalues and the maximum and minimum eigenvalues, in ensembles where such results are not known. This includes a variety of patterned random matrix ensembles, such as random Toeplitz and Hankel matrices and random band matrices. For a survey of results on patterned random matrices, see~\cite{boseetal}. 
\item Prove a distributional limit theorem in any of the examples discussed in this paper.  
\end{enumerate}

\section*{Acknowledgments} 
I am grateful to David Aldous, Louis-Pierre Arguin, Erik Bates, Wei-Kuo Chen, Michael Damron, Peter Forrester, Christian Houdr\'e, Svante Janson, Ron Peled,  Yuval Peres, Mike Steele, Johan W\"astlund, Harry Zhou and the anonymous referees for many helpful comments and references.

\end{document}